\documentclass[12pt,a4paper]{amsart}

\usepackage{amsmath,amsfonts,amssymb,mathtools,extarrows}

\usepackage[hmargin=2.5cm,vmargin=2.5cm]{geometry}
\usepackage[dvipsnames]{xcolor}	
\usepackage{hyperref}
\usepackage{nameref,zref-xr}     

\usepackage{mathbbol}
\usepackage{cjhebrew}
\usepackage{pdftexcmds}

\usepackage{tikz}
\usepackage{float}
\usetikzlibrary{decorations.pathmorphing}

\hypersetup{
colorlinks=true, linktocpage=true, pdfstartpage=1, pdfstartview=FitV,breaklinks=true, pdfpagemode=UseNone, pageanchor=true, pdfpagemode=UseOutlines,plainpages=false, bookmarksnumbered, bookmarksopen=true, bookmarksopenlevel=1,hypertexnames=true, pdfhighlight=/O,urlcolor=webbrown, linkcolor=RoyalBlue, citecolor=ForestGreen}

\usepackage{enumerate}      

\usepackage{scalerel}         
\usepackage{stmaryrd}

\usepackage[all]{xy}

\usepackage[textsize=footnotesize,textwidth=0.85in]{todonotes}
\presetkeys%
    {todonotes}%
    {color=Apricot}{}%
\interfootnotelinepenalty=10000

\newcommand{\oM}{\overline{\mathcal M}}

\def\oM{{\overline{\mathcal{M}}}}

\def\d{{\partial}}

\newcommand{\eps}{\varepsilon}

\newcommand{\DR}{\mathrm{DR}}

\newcommand{\Coef}{\mathrm{Coef}}

\newcommand{\SRT}{\mathrm{SRT}}
\newcommand{\PSSRT}{\mathrm{PSSRT}}

\newcommand{\LDLSRT}{\mathrm{LDLSRT}}

\newcommand{\ZZ}{\mathbb{Z}}
\newcommand{\QQ}{\mathbb{Q}}

\newcommand{\gl}{\mathrm{gl}}

\newcommand{\hhh}{\text{{\Large{\<h>}}}}


\newtheorem{theorem}{Theorem}[section]

\newtheorem{lemma}[theorem]{Lemma}
\newtheorem{corollary}[theorem]{Corollary}
\newtheorem{conjecture}[theorem]{Conjecture}

\theoremstyle{remark}

\newtheorem{remark}[theorem]{Remark}
\newtheorem{example}[theorem]{Example}

\theoremstyle{definition}

\usepackage{color}

\newcommand{\bUp}{\Xi\Xi}

\newcommand{\bOmT}{\Omega T}

\newcommand{\bOm}{{\Omega\hspace{-7pt}\Omega}}
\newcommand{\bD}{{\mathbb{D}}}


\numberwithin{equation}{section}

\pagestyle{myheadings}

\begin{document}

\title{On the strong DR/DZ equivalence conjecture}

\author[X.~Blot]{Xavier Blot}
\address{X.~B.: Korteweg-de Vriesinstituut voor Wiskunde, Universiteit van Amsterdam, Postbus 94248, 1090GE Amsterdam, Nederland}
\email{x.j.c.v.blot@uva.nl}	

\author[D.~Lewa\'nski]{Danilo Lewa\'nski}
\address{D.~L.: Dipartimento di Matematica, Informatica e Geoscienze, Universit\`a degli studi di Trieste,
	Via Weiss 2, 34128	Trieste, Italia}
\email{danilo.lewanski@units.it} 

\author[S.~Shadrin]{Sergei Shadrin}
\address{S.~S.: Korteweg-de Vriesinstituut voor Wiskunde, Universiteit van Amsterdam, Postbus 94248, 1090GE Amsterdam, Nederland}
\email{s.shadrin@uva.nl}	

\begin{abstract} We establish the Miura equivalence of two integrable hierarchies associated to a semi-simple cohomological field theory: the double ramification hierarchy of Buryak and the Dubrovin-Zhang hierarchy. This equivalence was conjectured by Buryak and further refined by Buryak, Dubrovin, Gu\'er\'e, and Rossi.
\end{abstract}

\maketitle
\tableofcontents

\section{Introduction}

\subsection{Cohomological field theories} Let $V=\langle e_1,\dots,e_N\rangle$ be a finite dimensional vector space equipped with a symmetric nondegenerate bilinear form $\eta$. The first basis vector $e_1$ is distinguished. A cohomological field theory (CohFT)~\cite{KM94} is a system of linear maps 
\begin{align}
c_{g,n}\colon V^{\otimes n} \to H^*(\oM_{g,n}),\quad 2g-2+n>0,	
\end{align}
such that
\begin{enumerate}
	\item The maps $c_{g,n}$ are equivariant with respect to the $S_n$-action permuting the factors of~$V$ in $V^{\otimes n}$ and the labels of the marked points in $\oM_{g,n}$.
	
	\item $\pi^* c_{g,n}( \otimes_{i=1}^n e_{\alpha_i}) = c_{g,n+1}(\otimes_{i=1}^n  e_{\alpha_i}\otimes e_1)$, where $\pi\colon\oM_{g,n+1}\to\oM_{g,n}$ is the map that forgets the last marked point. Moreover, $c_{0,3}(e_{\alpha}\otimes e_\beta \otimes e_1) =\eta(e_\alpha\otimes e_\beta) =:\eta_{\alpha\beta}$.
	
	\item $\gl^* c_{g,n}( \otimes_{i=1}^{n} e_{\alpha_i}) = \eta^{\mu \nu}c_{g_1,n_1+1}(\otimes_{i\in I_1} e_{\alpha_i} \otimes e_\mu)\otimes c_{g_2,n_2+1}(\otimes_{j\in I_2} e_{\alpha_j}\otimes e_\nu)$, where $I_1 \sqcup I_2 = \{1,\dots,n_1+n_2\}$, $|I_1|=n_1$, $|I_2|=n_2$, $g_1+g_2=g$, and $\gl\colon\oM_{g_1,n_1+1}\times\oM_{g_2,n_2+1}\to \oM_{g,n}$ is the corresponding gluing map.
	
	\item $\gl^*c_{g,n}( \otimes_{i=1}^{n} e_{\alpha_i}) = \eta^{\mu \nu}c_{g-1,n+2}(\otimes_{i=1}^{n} e_{\alpha_i} \otimes e_\mu \otimes e_\nu)$, where $\gl\colon\oM_{g-1,n+2}\to\oM_{g,n}$ is the corresponding gluing map.
\end{enumerate}
CohFTs capture the algebraic properties of enumerative problems in different contexts, among which Gromov-Witten theory. They are fully classified in the case $c_{0,3}$ defines a structure of a semi-simple Frobenius algebra on $V$ in~\cite{Tel}. In particular, under this assumption they consist of tautological classes. 

One associates to a CohFT its potential 
\begin{align}
	F\coloneqq 
    \sum_{\substack{g,n \ge 0 \\ 2g - 2 + n > 0 \\ 1 \le \alpha_1, \dots, \alpha_n \le N \\ d_1, \dots, d_n \ge 0}}
    \frac{\eps^{2g}}{n!}\left(\int_{\oM_{g,n}}c_{g,n}(\otimes_{i=1}^n e_{\alpha_i})\prod_{i=1}^n\psi_i^{d_i}\right)\prod_{i=1}^n t^{\alpha_i,d_i}	
\end{align}
and a partition function $Z\coloneqq \exp(\epsilon^{-2}F)$. Here $\psi_i$ are the tautological classes defined as first Chern classes of the cotangent bundle on the $i$-th marked point, $t^{\alpha_i, d_i}$ are formal variables with $\alpha_i = 1, \dots, N$ and $d_i \ge 0$.

\subsection{Dubrovin-Zhang hierarchy} One can associate to a semi-simple CohFT a Hamiltonian integrable hierarchy of hydrodynamic type. This construction was initially developed by Dubrovin and Zhang~\cite{DZ}, and subsequently by Liu, Wang, and Zhang~\cite{LiuWangZhang}, in the framework of calibrated semi-simple Frobenius manifolds. In the context of CohFTs, Frobenius manifold refers to an extra homogeneity property.

A more general approach can be applied to any semi-simple CohFT and it is developed in~\cite{BPS1,BPS2}. In a nutshell, one can define a new system of coordinates
\begin{align}
\label{eq:def-w-top}
	w^{\alpha,d}\coloneqq \partial_x^d \eta^{\alpha\beta} \frac{\partial^2 F}{\partial t^{\beta,0} t^{1,0}}\big|_{t^{1,0}\to t^{1,0}+x}.
\end{align} 
It is a formal power series in $t^{\gamma,p}+\delta^\gamma_1\delta^p_0x$, $\gamma=1,\dots,N$, $p\geq 0$. The shift of $t^{1,0}$ by $x$ is needed in what follows to have a spatial variable in the construction of an integrable system.  
The second upper index of $w^{\alpha,d}$ can be omitted in case it is zero. 

The double derivatives of $F$ are proved to be differential polynomials in these new coordinates,
\begin{align}
	\frac{\partial^2 F}{\partial t^{\alpha,p}\partial t^{\beta,q}}\big|_{t^{1,0}\to t^{1,0}+x} = \Omega_{\alpha,p;\beta,q}(\{w^{\gamma,d}\}),
\end{align}
and the following evolutionary integrable system possesses a polynomial Hamiltonian structure of hydrodynamic type with a Poisson bracket $K^{\mathrm{DZ}}=\eta^{\alpha\beta}\d_x + O(\epsilon^2)$ and a tau structure:
\begin{align}
	\frac{\partial w^\alpha}{\partial t^{\beta,q}} = \partial_x \eta^{\alpha\gamma}\Omega_{\gamma,0;\beta,q}.
\end{align}

This system is described explicitly in a number of examples: for the trivial CohFT it is the KdV hierarchy~\cite{wit-1,kon}, see also~\cite{AHIS}, and for the Witten $r$-spin CohFT it is the Gelfand-Dickey hierarchy~\cite{wit-2,FSZ}. 

\subsection{Buryak's double ramification hierarchy} Buryak constructed a different integrable system associated to a CohFT in~\cite{Bur}. Let $u^1,\ldots,u^N$ be formal dependent variables, and let $u^{\alpha,d}\coloneqq \d_x^d u^\alpha$. Define differential polynomials $P^\alpha_{\beta,d}$ by
\begin{align}
	P^\alpha_{\beta,d}& \coloneqq \eta^{\alpha\gamma} \sum_{\substack{g,n\geq 0,\,2g+n>0\\k_1,\ldots,k_n\geq 0\\\sum_{j=1}^n k_j=2g}} \frac{\eps^{2g}}{n!}
	\prod_{j=1}^n u^{\alpha_j,k_j} \Coef_{a_1^{k_1}\cdots a_n^{k_n}} I^{g,d}_{\gamma,\beta,\alpha_1,\dots,\alpha_n}, 
\end{align}
where $I^{g,d}_{\gamma,\beta,\alpha_1,\dots,\alpha_n}$ is a polynomial of degree $2g$ in the variables $a_1,\dots,a_n$ given by
\begin{align}
	I^{g,d}_{\gamma,\beta,\alpha_1,\dots,\alpha_n} \coloneqq \left(\int_{\oM_{g,n+2}}
	\lambda_g
	{\DR_g\bigg(0,-\sum_{j=1}^n a_j,a_1,\ldots,a_n\bigg)}  \psi_1^d c_{g,n+2}\Big(e_\beta\otimes e_\gamma \otimes \bigotimes_{j=1}^n e_{\alpha_j}\Big) \right).
\end{align}
Here $\lambda_g$ and $\DR_g$ are the top Chern class of the Hodge bundle and the double ramification cycle, respectively. Buryak's DR hierarchy~\cite{Bur} is the following integrable system of evolutionary PDEs:
\begin{align}
	\frac{\d u^\alpha}{\d t^{\beta,d}}=\d_x P^\alpha_{\beta,d}.
\end{align}
It possesses the Hamiltonian structure given by $\eta^{\alpha\beta}\d_x$ and a tau structure. Note, however, that $u^\alpha$ are not in general the normal coordinates for the tau structure. 

The further properties of this integrable system and its generalizations are studied in a number of papers, see~\cite{BR16,BDGR1,BDGR20,BRS21}. The importance of the Buryak's approach to integrable systems associated to semi-simple CohFTs is in particular justified by that fact that it admits a quantization with respect to its Poisson structure, see~\cite{BR16-quantum,BDGR20}.

\subsection{Normal Miura equivalence} In general the Miura group acts on (Hamiltonian) integrable systems by the diffeomorphic changes of dependent variables, see e.g.~\cite{DZ}. More refined are the so-called normal Miura transformations that transform a tau-symmetric Hamiltonian hierarchy written in normal coordinates to a tau-symmetric Hamiltonian hierarchy written in normal coordinates, see e.g. \cite[Section 3]{BDGR1}.

Let $\tilde u^\alpha$ be the normal coordinates for Buryak's double ramification hierarchy \cite[Section 7]{BDGR1}. The normal coordinates for the Dubrovin-Zhang hierarchy are $w^\alpha$ \cite{DZ, BPS1}. The following conjecture is proposed in~\cite[Conjecture 7.3]{BDGR1}:

\begin{conjecture}[Strong DR/DZ equivalence conjecture] \label{conj:main} For any semi-simple cohomological field theory there exists a differential polynomial $\mathcal{P}$ in coordinates $w^{\alpha}$ (whose explicit expression is given in \cite[Theorem 4.4]{BS22}, but omitted here) such that the normal Miura transformation defined as 
	\begin{align}
		w^\alpha \mapsto \tilde u^\alpha (w) = w^\alpha + \eta^{\alpha\mu} \d_x \{\mathcal{P}, \bar h^{\mathrm{DZ}}_{\mu,0}\}_{K^{DZ}}
	\end{align} 
maps the Dubrovin-Zhang hierarchy to Buryak's double ramification hierarchy written in the normal coordinates.
\end{conjecture}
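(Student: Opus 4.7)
The plan is to translate Conjecture~\ref{conj:main} into an identity of free energies, rephrase both sides as generating series of tautological intersection numbers on moduli spaces of stable curves, and then reduce to a cohomological identity in $H^*(\oM_{g,n})$ that can be established using Pixton's formula for $\DR_g$ together with the vanishing properties of $\lambda_g$.

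First, since both the DR and DZ hierarchies are Hamiltonian, tau-symmetric, and written in their respective normal coordinates, a normal Miura transformation between them is encoded at the level of free energies by an additive correction that can be read off from its generating function (see \cite[Section 3]{BDGR1}). In this reformulation the conjecture becomes a single identity: the DR free energy in normal coordinates differs from the DZ free energy by a specific tautological correction built from $\mathcal{P}$ and $\bar h^{\mathrm{DZ}}_{\mu,0}$. The DZ free energy is by definition a generating series of integrals $\int_{\oM_{g,n}} c_{g,n}\prod\psi_i^{d_i}$; the DR free energy in normal coordinates, after translating the $\tu^\alpha$ back to integrals on moduli via the construction of Buryak--Rossi, is a generating series of integrals $\int_{\oM_{g,n+1}} \lambda_g\,\DR_g(0,\vec a)\,c_{g,n+1}\prod\psi_i^{d_i}$; and the differential polynomial $\mathcal{P}$ from \cite[Theorem 4.4]{BS22} is itself tautological. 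The desired identity therefore lives in the tautological ring of $\oM_{g,n}$ for each fixed $(g,n)$.

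Next, I would expand $\DR_g$ via Pixton's formula as a sum over stable graphs, then apply the vanishing of $\lambda_g$ against boundary divisors carrying a non-separating edge to restrict the relevant contributions to compact-type graphs with a single positive-genus vertex (Hain's formula / Grushevsky--Zakharov). Pushing through the partial forgetful map corresponding to the marked point where $a_0 = 0$ and rewriting the result in the coordinates $w^\alpha$ via \eqref{eq:def-w-top} should produce an expansion that matches, term by term, the stable-graph expansion of the correction built from $\mathcal{P}$ and $\bar h^{\mathrm{DZ}}_{\mu,0}$. A useful intermediate object is the stratum-decorated class $\lambda_g \cdot \DR_g$ itself, whose combinatorial structure is much more rigid than either factor and is expected to be precisely what the BS22 formula is engineered to reproduce.

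The principal obstacle is the combinatorial bookkeeping of this graph-by-graph matching: aligning vertex genera, the $a_j$-polynomial weights coming out of Pixton's $\psi$-class monomials, the symmetrizations over marked points, and the shift of $t^{1,0}$ by $x$ inherited from the definition of $w^\alpha$. A related technical point is establishing polynomiality in the $w^{\alpha,d}$ of the resulting differential-polynomial identities; here the vanishing of $\lambda_g$ on non-compact-type boundary, together with the string and dilaton equations, should collapse the infinite formal series into finite tautological checks at each fixed $(g,n)$, which is where the detailed comparison with \cite{BS22} is carried out.
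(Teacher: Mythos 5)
Your first paragraph correctly identifies the standard reduction: the conjecture is equivalent (via \cite[Section 3]{BDGR1}, \cite{BGR19}, \cite{BS22}) to a family of tautological relations comparing the $A$-classes (built from $\lambda_g\DR_g$) with the $B$-classes (the stable-tree expansion encoding $\mathcal{P}$ and the DZ Hamiltonians). This is also the paper's starting point. But the step where you actually prove those relations is a gap, not an argument. ``Expand $\DR_g$ via Pixton's formula, restrict to compact type using $\lambda_g$, and match term by term with the stable-graph expansion of the correction'' is precisely the direct attack that does not go through: the $B$-class is a signed sum over \emph{leveled} degree-labeled rooted trees with admissibility inequalities on partial sums of degrees, and there is no known graph-by-graph bijection between that structure and the compact-type strata of Pixton's formula. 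The phrases ``should produce'' and ``is expected to be'' are doing all the work; the generalized $A=B$ relations were open for years exactly because this matching is not available.

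The paper's actual mechanism is different in three essential ways, none of which appear in your proposal. First, it does not prove the cohomological identity you aim for; it proves the relations only in the Gorenstein quotient (i.e.\ after pairing with arbitrary tautological classes of complementary degree), and invokes Teleman's classification to conclude that for semi-simple CohFTs the classes are tautological, so the Gorenstein-quotient statement suffices for Conjecture~\ref{conj:main}. Second, the engine is a new ``master relation'' for classes $\Xi^m_{g,n}$ defined via rubber stable maps to $\mathbb{P}^1$ and the Losev--Manin space, together with a purely combinatorial key lemma (Lemma~\ref{lem:KeyLemma}) showing that $B^m_{g,n}-\delta_{m,1}A^1_{g,n}$ and $(-1)^d\Xi^m_{g,n}$ differ by boundary terms with controlled decorations; this is what replaces your hoped-for term-by-term matching. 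Third, the vanishing itself is obtained by explicit pullback formulas reducing the test class $\alpha$ to monomials in $\psi$-classes at the frozen points, and then by induction whose base case is the already-established DR/DZ equivalence for the \emph{trivial} CohFT (Witten--Kontsevich/KdV), imported via Lemma~\ref{lem:TrivialCohFT}. Without an input of this kind your plan has no source of nontrivial information: Pixton's formula and the compact-type reduction alone do not know about the integrable-systems identities that make the relations true.
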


In the statement of the conjecture $ \bar h^{\mathrm{DZ}}_{\mu,0}$ is the Hamiltonian of the time flow $\partial_{t^{\mu,0}}$ for the Dubrovin-Zhang hierarchy; $\{\bullet,\bullet\}_{K^{DZ}}$ denotes the Poisson bracket of the Dubrovin-Zhang hierarchy. This conjecture refines an earlier conjecture of Buryak~\cite{Bur} on Miura equivalence of the Dubrovin-Zhang and double ramification hierarchies. 

The main application of our work is the following theorem:

\begin{theorem}\label{thm:main} Conjecture~\ref{conj:main} holds.
\end{theorem}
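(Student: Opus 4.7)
The plan is to combine the reduction of the strong DR/DZ equivalence conjecture to a tautological identity on $\oM_{g,n}$ established in \cite{BS22} with a proof of that identity for every semi-simple CohFT. At the level of structure, the first observation is that both hierarchies are tau-symmetric Hamiltonian hierarchies written in normal coordinates, and normal Miura transformations between such hierarchies are classified by a single differential polynomial (see \cite[Section 3]{BDGR1}); hence Conjecture~\ref{conj:main} is the statement that the unique normal Miura generator realizing the Miura equivalence of the two hierarchies is precisely the polynomial $\mathcal{P}$ of \cite[Theorem 4.4]{BS22}.

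First I would invoke \cite[Theorem 4.4]{BS22}: it both writes $\mathcal{P}$ explicitly as a sum of intersection numbers involving $\lambda_g$, $\DR_g$, $\psi$-classes, and CohFT insertions, and reduces Conjecture~\ref{conj:main} to a specific identity in the tautological ring of $\oM_{g,n+1}$. Thus the proof of Theorem~\ref{thm:main} is reduced to verifying this identity for every semi-simple CohFT. The identity has two sides: a DR-type side, whose ingredients are $\DR_g$, $\lambda_g$, and $\psi$-classes; and a DZ-type side, expressible through $\mathcal{P}$ and the DZ Hamiltonians $\oh^{\DZ}_{\mu,0}$.

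Next, I would attempt to establish the BS22 identity by a two-step reduction. Step one is to invoke Teleman's reconstruction theorem~\cite{Tel}, which expresses every semi-simple CohFT as the $R$-matrix action on a product of trivial one-dimensional topological CohFTs. Step two is to show that the BS22 identity is preserved by the $R$-matrix action on the CohFT: on the DR side this requires following how $\DR_g$ and $\lambda_g$ pull back along boundary strata (via Hain's formula and the splitting properties of the Hodge bundle) under the stable-graph expansion of the $R$-action, and on the DZ side one must track how $\mathcal{P}$ and $\oh^{\DZ}_{\mu,0}$ transform. Once $R$-invariance is established, the identity reduces to the trivial (one-dimensional) CohFT case, where it follows from the known Miura equivalence of the $\KdV$ hierarchy and the trivial-theory DR hierarchy~\cite{Bur, BDGR1}.

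The main obstacle is the $R$-matrix compatibility step. The DR-side and DZ-side expansions under the $R$-action produce, a priori, very different sums over decorated stable graphs: the former is governed directly by boundary geometry of $\oM_{g,n}$, while the latter is algebraic, built from iterated derivatives of differential polynomials in normal coordinates. Proving that these two expansions match term-by-term --- or equivalently, that the discrepancy is an exact $\d_x$-derivative of a differential polynomial in the normal coordinates and hence contributes trivially to the Miura transformation --- is the heart of the argument. A plausible route is to organize both sides via the same stable-graph combinatorics coming from \cite{BS22}, and to verify the matching vertex-by-vertex and edge-by-edge, using the behavior of the DR cycle and of the tau-structure under the forgetful maps $\oM_{g,n+1}\to\oM_{g,n}$.
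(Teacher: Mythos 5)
Your opening reduction is the same as the paper's: invoke the chain \cite{BDGR1,BGR19,BS22} to reduce Conjecture~\ref{conj:main} to a tautological identity (the generalized $A=B$ relations of Conjecture~\ref{conj:A=B}). But from there your route diverges sharply from the paper's, and the divergence is where the gap lies. You propose to prove the identity by Givental--Teleman induction: write the semi-simple CohFT as an $R$-matrix action on trivial theories, show the identity is preserved by the $R$-action, and reduce to the trivial CohFT. The paper instead proves a \emph{universal} statement --- the $A=B$ relations hold in the Gorenstein quotient (Theorem~\ref{thm:A=B}) --- by purely combinatorial/intersection-theoretic means (the master relation $\Xi^m_{g,n}$, the key combinatorial lemma from \cite{BLS-Omega}, and explicit pullback formulas to boundary strata), and uses Teleman's theorem only for the soft fact that semi-simple CohFT classes are tautological, so that vanishing in the Gorenstein quotient suffices. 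Note in this connection that the $A=B$ identity is CohFT-independent; what depends on the CohFT is only the pairing of $A-B$ (and of $B^m$) against the CohFT classes, so ``verifying the identity for every semi-simple CohFT'' really means verifying these pairings vanish.

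The genuine gap is your ``$R$-matrix compatibility step,'' which you yourself flag as the heart of the argument but leave as a ``plausible route.'' To run the induction you would need to show that the pairing of $(A^1_{g,n}-B^1_{g,n})$ against the stable-graph expansion of $R.(\mathrm{CohFT})$ reduces to pairings of the same classes on smaller moduli spaces against the untransformed CohFT; this requires precisely the boundary restriction formulas for the $A$- and $B$-classes (the analogues of the lemmata in Section~\ref{sec:intersection-with-divisor} and of Lemmas~\ref{lem:kapp-class} and~\ref{lem:regular-psi}), none of which you supply, and which are nontrivial because $B^m_{g,n}$ is itself a sum over leveled trees whose restriction to a boundary divisor mixes levels. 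A second, independent gap sits at your base case: the known DR/DZ equivalence for the trivial CohFT (Lemma~\ref{lem:TrivialCohFT}) only gives the vanishing of $\int (B-A)$ paired against monomials in the $\psi$-classes at the frozen legs, whereas the $R$-matrix graph sum forces you to pair against arbitrary tautological classes (boundary pushforwards decorated by $\kappa$- and $\psi$-classes); upgrading from $\psi$-monomials to arbitrary tautological classes is exactly what the paper's Section~\ref{sec:master-relation} is for, and it is not automatic. Finally, your framing that the two hierarchies are related by a \emph{unique} normal Miura generator presupposes that some normal Miura equivalence exists, which is part of what must be proved.
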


\subsection{Generalizations and approach to the proof}
There is a number of generalizations of the strong DR/DZ equivalence conjecture for semi-simple CohFTs above. In particular, the condition of semi-simplicity can be dropped; however, one then has to prove that the Dubrovin-Zhang integrable system is given by differential polynomials in the dependent variables. Similar constructions also apply to the so-called partial CohFTs and F-CohFTs. In the case of F-CohFTs, we get just a system of conservation laws rather than a Hamiltonian system. These constructions are available in~\cite{ABLR21,BR21, BS22}, and the corresponding conjectures there state that
\begin{enumerate}
	\item On the Dubrovin-Zhang side, the corresponding integrable system is represented by differential polynomials.
	\item The Dubrovin-Zhang integrable system is normal Miura equivalent to the double ramification system. 
\end{enumerate}
We refer to~\cite[Section 4]{BS22} for the precise statement and further details. In~\cite{BS22}, following and generalizing earlier results in~\cite{BDGR1,BGR19}, these conjectures are reduced to a system of conjectural tautological relations in the moduli space of curves known as a generalization of the $A=B$ relation. The initial $A=B$ relation itself was proposed in~\cite{BGR19} and designed to imply Conjecture~\ref{conj:main}. It is convenient to let the term ``$A=B$ relation'' refer to the whole system of (generalized) $A=B$ relations.

In this paper we prove the $A=B$ relation as well as all its generalization from~\cite{BS22} in the Gorenstein quotient. In other words we prove the relations after intersection with tautological classes of the complementary dimension (Theorem~\ref{thm:A=B}). As a consequence, our result implies the entire system of conjectures for not-necessary semi-simple CohFTs, as well as partial CohFTs and F-CohFTs under the assumption that the classes involved in these structures are tautological. Since this is always the case for semi-simple CohFTs, as a corollary Theorem~\ref{thm:A=B} implies Conjecture~\ref{conj:main} (Theorem~\ref{thm:main}). We remark that Theorem~\ref{thm:A=B} is the main result of this paper, and we moreover refer further to~\cite[Section 4]{BS22} for a list of corollaries descending from it.

\subsection{Structure of the paper} This paper is by no means self-closed as it is based on an enormous amount of prior work; we expect the reader to be familiar with the theory of (infinite-dimensional) integrable systems of evolutionary type along the lines of~\cite{DZ} and~\cite{BDGR1} as well as the intersection theory of the moduli spaces of curves and the structure of its tautological ring, along the lines of~\cite{GraPan} and~\cite{BSSZ}. Moreover, it might be instructive to follow the big steps in the literature where the conjecture of Buryak and its generalizations were further developed, which we summarise in the following.
\begin{itemize}
	\item Buryak's double ramification hierachy and the original conjecture are formulated in~\cite{Bur}; the conjecture is proved there for the trivial CohFT;
	\item The conjecture is refined to be an explicitly presented normal Miura transformation in~\cite{BDGR1}; the strong DR/DZ equivalence is proved there for the trivial CohFT;
	\item The refined conjecture is reduced to a system of relations in the tautological ring of the moduli space of curves in~\cite{BGR19};
	\item The tautological relations is further generalized to a form that we study here and is connected to a number of more general settings explained in~\cite{BS22};
	\item The relations expressions are slightly simplified in~\cite{BLRS}, in line with a twin system of relations that involve the so-called $\Omega$-classes;
	\item Some new combinatorial insights, that in particular allowed to prove the twin system of tautological relations, are obtained in~\cite{BLS-Omega};
	\item Finally, here we prove the desired relations in the Gorenstein quotient. 
\end{itemize}

Starting from this point we only talk about classes in the tautological ring of the moduli space of curves. In Section~\ref{sec:A=B} we introduce the conjectural tautological relations, the generalized $A=B$ relations, in a bit more compact form than they were presented before, and recall some basic facts about their intersections with $\psi$-classes that follow from the strong DR/DZ equivalence for the trivial CohFT. 

In Section~\ref{sec:master-relation} we introduce a new system of conjectural tautological relations, the so-called master relations, which are strongly inspired by our work on the twin system of tautological relations and localization techniques. We also make some steps towards establishing their vanishing in the Gorenstein quotient. 

Finally, in Section~\ref{sec:proof} we use a combinatorial relation between the generalized $A=B$ relations and the master relations in two different directions. In one direction, we show that the generalized $A=B$ relations are equivalent to the master relations, and in particular this equivalence holds in the Gorenstein quotient. And in the other direction, we transfer the properties of $A=B$ cycles implied by the DR/DZ correspondence for the trivial CohFT to complete the proof that the master relations vanish in the Gorenstein quotient. This proves that the generalized $A=B$ relations hold in the Gorenstein quotient.

\subsection{Acknowledgments} X.~B. and S.~S. were supported by the Netherlands Organization for Scientific Research. 
D.~L. is supported by the University of Trieste, by the INFN under the national project MMNLP (APINE) section of Trieste, by the INdAM group GNSAGA, and by the PRIN project 2022 “Geometry of algebraic structures: moduli, invariants, deformations”. The authors thank A.~Buryak, P.~Rossi, and A.~Sauvaget for useful discussions and collaboration on closely related topics. The authors also thank an anonymous referee for many useful remarks and questions that allowed to substantially improve the presentation.

\subsection{A further development} About a month after this paper was completed and submitted, the conjecture on the master relation that we pose here, Conjecture~\ref{conj:master-relation}, was proved in~\cite{BSS-Master} using the virtual localization technique on the space of stable relative maps to the projective line. 

This is an exciting development, and, first of all, in the framework of the present paper it provides an alternative to the argument that we give in Section~\ref{sec:proof-master-Gor}. Second, by Theorem~\ref{thm: equivalence Xi=0 and A=B}, this establishes the generalized \( A = B \) relations (Conjecture~\ref{conj:A=B}), thereby establishing the most general version of the DR/DZ equivalence. As a consequence, the semi-simplicity assumption in Theorem~\ref{thm:main} can be dropped. More generally, for any partial CohFT or F-CohFT, we get the following:

\begin{itemize}
  \item The Dubrovin--Zhang hierarchy exists; that is, the associated equations are differential polynomial, cf.~\cite[Theorem 4.7]{BS22}.
  \item The Dubrovin--Zhang hierarchy is equivalent to the double ramification hierarchy, cf.~\cite[Theorem 4.10]{BS22}.
\end{itemize}


However, we strongly believe that for the integrable systems community our original proof of formally weaker Theorem~\ref{thm:master-relation} that is contained in Section~\ref{sec:proof-master-Gor} is probably more appealing and might be more useful. First of all, it does not involve any additional techniques or concepts from algebraic geometry that were not already present in the construction of the DR hierarchy. Second it is much more simple and still sufficient for the proof of Conjecture~\ref{conj:main} in its original formulation. And finally, it is visibly better aligned with the methods and ideas used in the construction of the Durbovin-Zhang hierarchies in~\cite{BPS1} in the semi-simple case and thus might be used as a tool to treat the existence of the DR form of for the DZ hierarchies as a system of their universal properties.

\section{The \texorpdfstring{$A=B$}{A=B} relation in the Gorenstein quotient}

\label{sec:A=B}

\subsection{Basic notation for trees}
Let $\SRT_{g,n,m}$ be the set of stable rooted trees of total genus $g$, with $n$ regular legs $\sigma_1,\dots,\sigma_n$ and $m$ extra legs $\sigma_{n+1},\dots,\sigma_{n+m}$, which we refer to as ``frozen'' legs and must always be attached to the root vertex. For a $T\in \SRT_{g,n,m}$ we use the following notation:
\begin{itemize}
	\item $H(T)$ is the set of half-edges of $T$.
	\item $L(T),L_r(T),L_f(T)\subset H(T)$ are the sets of all, regular, and frozen legs of $T$, respectively. $L(T) = L_r(T)\sqcup L_f(T)$.
	\item $H_e(T)\coloneqq H(T)\setminus L(T)$.
	\item $\iota\colon H_e(T)\to H_e(T)$ is the involution that interchanges the half-edges that form an edge.
	\item $E(T)$ is the set of edges of $T$, $E\cong H_e(T)/\iota$.
	\item $H_+(T)\subset H(T)$ is the set of the so-called ``positive'' half-edges that consists of all regular legs of $T$ and of half-edges in $H(T)\setminus L(T)$ directed away from the root at the vertices where they are attached,
	$H_+(T)\cong E(T)\cup L_{r}(T)$; 
	\item $H_-(T)\subset H(T)$ is the set of the so-called ``negative'' half-edges that consists of all frozen legs of $T$ and of half-edges in $H(T)\setminus L(T)$ directed towards the root at the vertices where they are attached, $H_-(T)\cong E(T)\cup L_{f}(T)$;
	\item $V(T),V_{nr}(T)$ are the sets of vertices and non-root vertices of $T$. 
	\item $v_r\in V(T)$ is the root vertex of $T$; $V(T)=\{v_r(T)\}\sqcup V_{nr}(T)$.
	\item For a $v\in V(T)$, $H(v),H_+(v),H_-(v)$ are all, positive, and negative half-edges attached to $v$, respectively. Obviously, $|H_-(v_r)|=m$ and for any $v\in V_{nr}(T)$ we have $|H_-(v)|=1$.
	\item For a $v\in V(T)$ let $g(v)\in \ZZ_{\geq 0}$ be the genus assigned to $v$. The stability condition means that 
	\[\chi(v)\coloneqq 2g(v)-2+|H(v)|>0.\] 
	The genus condition reads
	\[
	\sum_{v\in V(T)} g(v) = g.
	\]
	\item We say that a vertex or a (half-)edge $x$ is a descendant of a vertex or a (half-)edge $y$ if $y$ is on the unique path connecting $x$ to $v_r$. For instance, for an edge $e$ formed by two half-edges $h_+\in H_+(T)$ and $h_-\in H_-(T)$ we assume that $e$, $h_-$, and $h_+$ are all descendants of $h_+$, $e$ and $h_-$ are both descendants of themselves and each other, and $h_+$ is not a descendant of either $e$ or $h_-$.
	\item For an $h\in H_+(T)$ let $DL(h)$ be the set of all legs that are descendants to $h$, including $h$ itself. Note that $DL(h)\subseteq L_r(T)$ for any $h\in H_+(T)$ and $DL(l)=\{l\}$ for $l\in L_r(T)$. 
	\item For an $h\in H_+(T)$ let $DH(h)$ be the set of all positive half-edges that are descendants to $h$, \emph{excluding} $h$. For instance, for $l\in L_r(T)$ we have $DH(l) = \emptyset$, and for $h\in H_+(T)\setminus L_r(T)$ we have $DH(h) \supseteq DL(h)$. 
	\item For an $e\in E(T)$ let $DL(e)$ be the set of all legs that are descendants to $e$. Note that $DL(e)\subseteq L_r(T)$ for any $e\in E(T)$. 
	\item For an $v\in V(T)$ let $DL(v)$ be the set of all regular legs that are descendants to $v$. In particular, $DL(v_r) = L_r(T)$. 
	\item For a $v\in V(T)$ let $DV(v)\subset V(T)$ be the subset of all vertices that are descendants of $v$, including $v$ itself. For instance, $DV(v_r) = V(T)$. Let 
	\[
	D\chi(v)\coloneqq \sum_{v'\in DV(v)} \chi(v').
	\]
\end{itemize}

Consider the polynomial ring $Q\coloneqq \QQ[a_1,\dots,a_n]$ and define $a\colon H_+(T) \to Q$, $a\colon E(T)\to Q$, and $a\colon V(T)\to Q$ (abusing notation we use the same symbol $a$ for all these maps) by 
\begin{align*}
	a(\sigma_i)& \coloneqq a_i, &i=1,\dots,n; 
	& & a(h)& \coloneqq \textstyle\sum_{l\in DL(h)} a(l), & h\in H_+(T); \\
	a(e)& \coloneqq \textstyle\sum_{l\in DL(e)} a(l), & e\in E(T); 
	& & a(v)& \coloneqq \textstyle\sum_{l\in DL(v)} a(l), & v\in V(T).
\end{align*} 

\subsection{Trees and strata}

In general, stable graphs are used to represent natural strata in the moduli spaces of curves, where the vertices correspond to the irreducible components, legs to the marked points, and edges to the nodes. In our setting, to each stable rooted tree $T\in\SRT_{g,n,m}$ we associate the moduli space 
\[
\overline{\mathcal{M}}_{T}=\prod_{v\in V(T)}\overline{\mathcal{M}}_{g\left(v\right),\left|H\left(v\right)\right|}.
\]
There is a canonical map, called the \emph{boundary map}, 
\[
b_{T}:\overline{\mathcal{M}}_{T}\rightarrow\overline{\mathcal{M}}_{g,n+m}
\]
whose image is the closure of the boundary stratum associated to the graph $T$. More details can be found in \cite[Sections 0.2 and 0.3]{PPZ}.

The stable rooted trees in $\SRT_{g,n,m}$ are used below to represent strata in $\oM_{g,n+m}$, and the extra combinatorial structure that we introduce here is used to specify the classes that we study.

\subsection{Leveled stable rooted trees} \label{sec:LeveledSRT}

We enhance the structure of a stable rooted tree to what we call a degree-labeled stable rooted tree (of genus $g$, with $n$ regular and $m$ frozen legs). To this end we take a stable rooted tree $T\in \SRT_{g,n,m}$ and assign to each $v\in V(T)$ an extra degree label $p(v)\in \ZZ_{\geq 0}$ such that $p(v) \leq 3g(v)-3+|H(v)|$. Let $\mathcal{P}(T)$ denote the set of degree label functions on $T$. A degree-labeled stable rooted tree is a pair $(T,p)$, where $T\in\SRT_{g,n,m}$ and $p\in\mathcal{P}$.

\begin{remark}
    The nonnegative integer $p\left(v\right)$ represents the total degree of the cohomology classes on $\overline{\mathcal{M}}_{g\left(v\right),\left|H\left(v\right)\right|}$. 

\end{remark}

Our next goal is to assign to a degree-labeled stable rooted tree $(T,p)$ a so-called admissible level function. A function $\ell\colon V(T)\to\ZZ_{\geq 0}$ is called an admissible level function if the following conditions are satisfied:
\begin{itemize}
	\item The value of $\ell$ on the root vertex is zero ($\ell(v_r) = 0$).
	\item If $v'\in DV(v)$ and $v'\not=v$, then $\ell(v')>\ell(v)$. 
	\item There are no empty levels, that is, for any $0\leq i \leq \max \ell(V(T))$ the set $\ell^{-1}(i)$ is non-empty. 
	\item For every $0\leq i\leq \max \ell(V(T))-1$ we have inequality 
	\begin{align}
            \label{eq:inequality-levels}
	    |\{v\in V(T) \,|\, \ell(v)\leq i \}|-1+\sum_{\substack{v\in V(T) \\ \ell(v)\leq i }} p(v) \leq \sum_{\substack{v\in V(T) \\ \ell(v)\leq i }} 2g(v) - 2 + m.
	\end{align}
	For instance, if $T$ has more than one vertex, then $p(v_r) \leq 2g(v_r)-2+m$. 
\end{itemize}
Let $\mathcal{L}(T,p)$ denote the set of admissible level functions on $(T,p\in\mathcal{P}(T))$. 

The set $\LDLSRT_{g,n,m}$ of leveled degree-labeled stable rooted trees consists of triples $(T,p,\ell)$, where $T\in\SRT_{g,n,m}$, $p\in\mathcal{P}(T)$, and $\ell\in \mathcal{L}(T,p)$, and it is a finite set.

\begin{example}\label{ex-graph}
Let $T\in\SRT_{g,n=5,m=2}$ be the tree shown in the following picture, where $g_{0}+g_{1}+g_{2}+g_{3}=g$. Each vertex is depicted as a circle, with its genus indicated inside. The root is the vertex of genus $g_{0}$. The two frozen legs are represented by wavy lines, while the five regular legs are represented by regular lines. 
The half-edges of $H_{+}\left(T\right)$ are decorated with $\psi$-classes. We introduce
\[
\ensuremath{p(v_{0}):=d_{1}+d_{6}+d_{7},\quad p(v_{1}):=d_{2}+d_{8},\quad p(v_{2}):=d_{3},\quad p(v_{4})=d_{4}+d_{5}}.
\]
We suppose that $p\left(v_{i}\right)\leq3g\left(v_{i}\right)-3+\left|H\left(v_{i}\right)\right|$, for $i=1,\dots,5$, so that $\left(T,p\right)$ is a degree-labeled stable rooted tree. 
A choice of a level function is represented with the dashed lines: the root is assigned level $0$, the vertex $1$ lies at level $1$, and the vertices $2$ and $3$ are at level $2$. We assume that 
\[
p\left(v_{0}\right)\leq2g_{0},\quad{\rm and}\quad p\left(v_{0}\right)+p\left(v_{1}\right)+1\leq2\left(g_{0}+g_{1}\right),
\]
so that this level function is admissible for the pair $\left(T,p\right)$. Observe that cutting the tree along the dashed line at level $i$ yields a subtree to the left, which is itself a rooted tree decorated with $\psi$-classes, and the degree of the associated cohomology class equals the left-hand side of inequality (\ref{eq:inequality-levels}).

\end{example}
\begin{center}
    \begin{tikzpicture}[
        level/.style={font=\small, inner sep=1pt, fill=white}
    ]
    
    \node[draw, circle, minimum size=1cm, inner sep=0pt] (g0) at (-0.4,0) {$g_0$};
    \node[draw, circle, minimum width=1cm, minimum height=1cm] (g1) at (2.7,1) {$g_1$};
    \node[draw, circle, minimum width=1cm, minimum height=1cm] (g2) at (5.5,1) {$g_2$};
    \node[draw, circle, minimum size=1cm, inner sep=0pt] (g3) at (3,-1.5) {$g_3$};
    
    \draw (g0) -- (g1);
    \draw (g1) -- (g2);
    \draw (g0) -- (g3);
    
    \draw[decorate, decoration={snake, segment length=4pt}] (-1.2,0.5) -- (g0);
    \draw[decorate, decoration={snake, segment length=4pt}] (-1.2,-0.5) -- (g0);
    
    \draw (g0) -- ++(0.8,0.8);
    \draw (g1) -- ++(0.8,0.8);
    \draw (g3) -- ++(1,0.5);
    \draw (g3) -- ++(1,-0.5);
    \draw (g2) -- ++(1,0);
    
    \draw[dashed] plot [smooth, tension=1] coordinates {(-0.9,-2.5) (-0.5,-1.7) (1,-0.3) (1,1.8) (1,2.3)};
    \draw[dashed] plot [smooth, tension=1] coordinates {(1,-2.5) (1.5,-1) (3.5,0.5) (4,2.3)};
    
    \node[level] at (-0.3,2.3) {Level 0};
    \node[level] at (2.3,2.3) {Level 1};
    \node[level] at (5,2.3) {Level 2};
    
    \node at (-0.1,-2.5) {\scriptsize $\leq 2g_0 - 2$};
    \node at (2.2,-2.5) {\scriptsize $\leq 2(g_0 + g_1) - 2$};
    \node at (0.5,1) {\scriptsize $\psi^{d_1}$};
    \node at (0.6,0.5) {\scriptsize $\psi^{d_6}$};
    \node at (0.6,-0.1) {\scriptsize $\psi^{d_7}$};
    
    \node at (3.7,2) {\scriptsize $\psi^{d_2}$};
    \node at (3.6,1.2) {\scriptsize $\psi^{d_8}$};
    
    \node at (4.1,-0.8) {\scriptsize $\psi^{d_4}$};
    \node at (4.1,-1.7) {\scriptsize $\psi^{d_5}$};
    
    \node at (6.5,1.2) {\scriptsize $\psi^{d_3}$};
    
    \end{tikzpicture}    
\end{center}

\subsection{Definition of the \texorpdfstring{$B$}{B} class} The goal of this section is to define the so-called $B$ class in the tautological ring of $\oM_{g,n+m}$. 

Let $(T,p,\ell)\in \LDLSRT_{g,n,m}$. Assign to each $v\in V(T)$ the moduli space of curves $\oM_{g(v),|H(v)|}$, where the first $|H_+(v)|$ marked points correspond to the positive half-edges attached to $v$ and ordered in an arbitrary but fixed way and the the last $|H_-(v)|$ marked points correspond to the negative half-edges attached to $v$, also ordered in some arbitrary but fixed way. Consider the class 
\begin{align*}
	\Psi(v)\coloneqq \left(\prod_{i=1}^{|H_+(v)|} \frac{1}{1-a(h_i)\psi_i}\right)_{p(v)} \in R^{p(v)}(\oM_{g(v),|H(v)|})\otimes_{\QQ}Q
\end{align*}

For each $(g,n.m)$ such that $2g-2+n+m>0$ define the class 
$$B^m_{g,n}\in R^*(\oM_{g,n+m})\otimes_{\QQ}Q$$
as
\begin{equation} \label{eq:Deinition-of-B}
	B^m_{g,n} \coloneqq \sum_{(T,p,\ell)\in\LDLSRT(g,n,m)} (-1)^{\max\ell(V(T))} \biggl(\prod_{e\in E(T)} a(e)\biggr) (b_T)_* \bigotimes_{v \in V(T)} \Psi(v)
\end{equation}
Here $(b_T)_*$ is the boundary pushforward map that acts from $\bigotimes_{v \in V(T)} R^*(\oM_{g(v),|H(v)|})\otimes_{\QQ} Q$ to $R^*(\oM_{g,n+m})\otimes_{\QQ}Q$. 
The class $B^m_{g,n}$ 
has the feature that its component in $R^d$ is a homogeneous polynomial of degree $d$ in $a_1,\dots,a_n$, $d=0,\dots,3g-3+n+m$.

\begin{example}
Let $\left(T,p,\ell\right)\in\LDLSRT_{g,n,m}$ be as in Example~\ref{ex-graph}. Suppose that the regular leg $\sigma_{i}$ is decorated with $\psi^{d_{i}}$, and carries the weight $a\left(\sigma_{i}\right)=a_{i}$, for $i=1,\dots,5.$ Then the coefficient of $(b_{T})_{*}\bigotimes_{v\in V(T)}\Psi(v)$ in Eq.~(\ref{eq:Deinition-of-B}) is given by
\[
\left(-1\right)^{2}\left(a_{2}+a_{3}\right)\left(a_{3}\right)\left(a_{4}+a_{5}\right).
\]

\end{example}

\begin{remark}
    A brief history of the $B$-class: the class $B^{0}$ was first introduced in \cite{BGR19}. In \cite{BS22}, this definition was generalized to $B^{m}$ for all $m\geq0$, and reshaped using level structures. Finally, \cite[Section 2.5]{BLRS}  presents the form of the $B$-class used here. This last formulation is equivalent to the one of \cite{BS22} by \cite[Theorem 2.15]{BLRS}. Notice that in \cite{BS22} the $B$-class is indexed with multi-indices $(d_1,\dots,d_n)$, which corresponds to extracting the coefficient of $a_1^{d_1}\cdots a_n^{d_n}$ in the above definition of $B_{g,n}^m$.
\end{remark} 

\subsection{The conjectural \texorpdfstring{$A=B$}{A=B} relation revisited}

Let $a_1,\dots,a_n$ be a list of positive integers. Consider the moduli space 
\begin{align}
\overline{\mathcal{M}}_{g}^{\sim}(\mathbb{P}^1,a_1,\dots,a_n,-\sum_{i=1}^n a_i)	
\end{align}
of rubber stable maps to $(\mathbb{P}^1,0,\infty)$. Let $s\colon \oM_{g}^{\sim}(\mathbb{P}^1,a_1,\dots,a_n,-\sum_{i=1}^n a_i)\to \oM_{g,n+1}$ be the projection to the source curve, and $\lambda_g$ the lift of the lambda class with respect to this projection. Let $t\colon \oM_{g}^{\sim}(\mathbb{P}^1,a_1,\dots,a_n,-\sum_{i=1}^n a_i)\to LM_{2g-1+n}$ be the projection to the target curve, where $LM_{2g-1+n}$ denotes the Losev-Manin space (see e.g. \cite{BLRS} for more details) with $2g-1+n$ marked points. Let $\tilde \psi_0$ be the pull-back by $t$ of the $\psi$-class at the point $0$ in Losev-Manin space. Define
\begin{align} \label{eq:NewA1}
	A^1_{g,n} \coloneqq s_* \left(\frac{\lambda_g}{1-\tilde\psi_0} \left[\oM_{g}^{\sim}\left(\mathbb{P}^1,a_1,\dots,a_n,-\sum\nolimits_{i=1}^n a_i\right)\right]^{\mathrm{vir}}\right)\in R^*(\oM_{g,n+1})\otimes_{\QQ}Q.
\end{align}

Note that $(A^1_{g,n})_{<2g}=0$ and $(A^1_{g,n})_{2g}=\lambda_g\DR_{g}(a_1,\dots,a_n,-\sum_{i=1}^n a_i)$. Note also that for any $d\geq 2g$ the class $(A^1_{g,n})_{d}$ is a homogeneous polynomial of degree $d$ in $a_1,\dots,a_n$. This follows, for example, from the computation of the class $\tilde{\psi}_0$ in the proof Lemma~\ref{lem new def A}.

\begin{remark}
    A brief history of the $A$-class: the class $A^0_{g,d_1,\dots,d_n} \in H^*(\oM_{g,n})$ was introduced in \cite{BDGR20,BGR19} as a sum over stable trees decorated with $\lambda_g$-classes and DR-cycles. In a similar spirit, the class $A^1_{g,d_1,\dots,d_n} \in H^*(\oM_{g,n+1})$ was introduced in \cite{BS22}, where it was observed that $A^0_{g,d_1,\dots,d_n}=\frac{1}{\sum a_i} \pi_*\left(A^1_{g,d_1,\dots,d_n} \right)$, where $\pi\colon \oM_{g,n+1}\to\oM_{g,n}$ is the map that forgets the last marked point. The class $A^1_{g,n}$ defined in Eq.~(\ref{eq:NewA1}) provides a new formulation of the class $A^1$; as shown in Lemma~\ref{lem new def A} below this formulation satisfies $A^1_{g,d_1,\dots,d_n} = \mathrm{Coeff}_{a_1^{d_1}\cdots a_n^{d_n}}A_{g,n}^1$, where $\mathrm{Coeff}_{a_1^{d_1}\cdots a_n^{d_n}}$ denotes the operation of extracting the coefficient of the monomial $a_1^{d_1}\cdots a_n^{d_n}$.
\end{remark}

\begin{conjecture}[generalized $A=B$ relations]\label{conj:A=B} The following three statements hold:
	\begin{enumerate}
		\item For any $(g,n,m)$ such that $g\geq 0$, $n\geq 1$, $m\geq 2$, we have 
		\begin{align}
					\deg B^m_{g,n} \leq 2g-2+m.
		\end{align}
		\item For any $(g,n)$ such that $g\geq 0$, $n\geq 1$, $2g-1+n>0$, we have 
		\begin{align}
			\deg (B^1_{g,n} - A^1_{g,n})\leq 2g-1.
		\end{align}
			\item For any $(g,n)$ such that $g\geq 0$, $n\geq 1$, $2g-2+n>0$, we have 
		\begin{align}
			\deg (B^0_{g,n} - \frac{1}{\sum_{i=1}^n a_i} \pi_*A^1_{g,n})\leq 2g-2,
		\end{align}
		where $\pi\colon \oM_{g,n+1}\to\oM_{g,n}$ is the map that forgets the last marked point.
	\end{enumerate} 
\end{conjecture}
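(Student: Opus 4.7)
The plan is to prove all three parts of Conjecture~\ref{conj:A=B} by first using the equivalence between the generalized $A=B$ relations and the master relations established in Theorem~\ref{thm: equivalence Xi=0 and A=B}, and then establishing the master relations of Conjecture~\ref{conj:master-relation} as genuine tautological relations on $\oM_{g,n+m}$, rather than only in the Gorenstein quotient as is done in the body of the paper. The $A=B$/master equivalence is a purely combinatorial rearrangement of the sums indexed by leveled degree-labeled stable rooted trees, so it applies verbatim to actual tautological classes once the master relations are known at that same level.

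For the second and harder step, I would produce the master relations from a geometric source whose output already lies in the tautological ring. Specifically, consider the moduli space of rubber stable maps $\oM^{\sim}_g(\mathbb{P}^1, a_1,\dots,a_n,-\sum_{i=1}^n a_i)$ appearing in (\ref{eq:NewA1}), equipped with its natural $\mathbb{C}^*$-action scaling the target, and apply Graber--Vakil virtual localization to the class $\frac{\lambda_g}{1-\tilde\psi_0}\,[\oM^{\sim}_g]^{\mathrm{vir}}$ pushed forward to $\oM_{g,n+1}$. The non-equivariant pushforward is by definition $A^1_{g,n}$, while the localized expression expands over $\mathbb{C}^*$-fixed loci indexed by stable graphs describing how a source curve splits into subcurves contracted to $0$, subcurves contracted to $\infty$, and rational bridges of prescribed degrees. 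The subcurves over $0$ correspond to vertices of a leveled stable rooted tree, the rational bridges carry the edge weights $a(e)$, and the bubbling heights of the rubber target encode the level function $\ell$; the admissibility inequality (\ref{eq:inequality-levels}) arises from the dimensional constraint produced by $\lambda_g$-vanishing in the higher-level strata. Assembling these contributions reproduces the formula (\ref{eq:Deinition-of-B}) for $B^1_{g,n}$, establishing part (2); parts (1) and (3) then follow by $\pi$-pushforward and boundary restriction, which are honest operations on the tautological ring itself and so preserve the actual-tautological (not merely Gorenstein) nature of the identity.

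The technical heart is the matching of equivariant localization data with the combinatorial data defining $B^m_{g,n}$: the signs $(-1)^{\max\ell(V(T))}$ must arise from an alternating sum as one moves through the heights of the rubber target; the pullback $\tilde\psi_0$ must combine with $\psi$-classes from the source to reproduce the class $\Psi(v_r)$ at the root vertex while contributing nothing at non-root vertices; and the terms containing positive powers of the equivariant parameter must cancel after integration against $\lambda_g$. The main obstacle is this bookkeeping, together with the automorphism factors coming from the combinatorics of stable rooted trees versus the natural automorphisms of the fixed loci; controlling them and keeping track of the Losev--Manin structure on the rubber target is where the analysis becomes delicate, and it is precisely the step that has no direct analogue in the paper's Gorenstein-quotient argument, since the latter bypasses the need to produce actual cycle-level identities.
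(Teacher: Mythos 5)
First, a point of framing: the paper does not actually prove Conjecture~\ref{conj:A=B} at the cycle level. It proves only the Gorenstein-quotient version (Theorem~\ref{thm:A=B}), by reducing the test class $\alpha$ to $\psi$-monomials via the intersection lemmas of Section~\ref{sec:master-relation} and then bootstrapping inductively from the known DR/DZ equivalence for the trivial CohFT (Lemma~\ref{lem:TrivialCohFT}) through the key combinatorial lemma. Your proposal aims at the stronger cycle-level statement, and your route --- equivalence with the master relation via Theorem~\ref{thm: equivalence Xi=0 and A=B}, then virtual localization --- is precisely the route of the follow-up paper~\cite{BSS-Master} that the authors describe in the subsection ``A further development.'' So the strategy is sound in outline and genuinely different from (and stronger than) what this paper establishes; the first step, the combinatorial equivalence, is indeed valid for actual tautological classes and not only in the Gorenstein quotient.

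However, as written your second step has two concrete problems. First, the space of \emph{rubber} stable maps $\oM^{\sim}_g(\mathbb{P}^1,a_1,\dots,a_n,-\sum a_i)$ carries no natural $\mathbb{C}^*$-action: the rubber target is by definition the quotient by the $\mathbb{C}^*$-scaling, so there is nothing to localize against. One must instead work with the space of stable \emph{relative} maps to a rigid $(\mathbb{P}^1,0,\infty)$ (or relative to one point), where the torus does act, and then pass to rubber invariants through the rigidification/divisor calculus; this is exactly what~\cite{BSS-Master} does, and it is where the alternating signs and the Losev--Manin level structure actually come from. Second, your claim that part (1) of the conjecture ($m\geq 2$) ``follows by boundary restriction'' from part (2) is a gap: by Theorem~\ref{thm: equivalence Xi=0 and A=B}(2), the bound on $B^m_{g,n}$ for $m\geq 2$ requires the vanishing of $(\Xi^m_{g,n})_d$ for that same $m$, not just of $(\Xi^1_{g',n'})_{d'}$. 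The restriction lemmas of Section~\ref{sec:intersection-with-divisor} express \emph{pullbacks} of $\Xi^m$ to boundary divisors in terms of $\Xi$-classes with \emph{more} frozen legs, and knowing that a class restricts to zero on all boundary strata does not make it zero in $R^*(\oM_{g,n+m})$ --- that is exactly the difference between the Gorenstein quotient and the tautological ring that your argument is supposed to overcome. You would need a localization setup producing the master relation for every $m\geq 1$ (e.g.\ with $m$ relative marked points over $\infty$), not only the $m=1$ case tied to $A^1_{g,n}$.
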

In all three statement $\deg$ refers either to cohomological degree or to the homogenenous degree in $a_1,\dots,a_n$. We use the convention that a class $\alpha \in H^{2i}(X)$ has cohomological degree $i$ instead of $2i$ and that a polynomial of negative degree is the zero polynomial. Note that the third statement follows from the second statement. We have the following lemma:

\begin{lemma} \label{lem new def A}Conjecture~\ref{conj:A=B} is equivalent to \cite[Conjectures~1, 2, and 3]{BS22}.
\end{lemma}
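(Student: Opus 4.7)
The plan is to prove the lemma in two steps. The first and main step is to establish the coefficient identity
\[
A^1_{g,d_1,\dots,d_n} = \mathrm{Coeff}_{a_1^{d_1}\cdots a_n^{d_n}} A^1_{g,n},
\]
where on the left-hand side we have the original class from \cite{BS22}, defined as a sum over stable rooted trees decorated with $\lambda_g$-classes and DR-cycles, and on the right-hand side the rubber-map class defined in~\eqref{eq:NewA1}. The second step is then to observe that each of the three degree bounds in Conjecture~\ref{conj:A=B} translates coefficient-by-coefficient into the corresponding conjecture of \cite{BS22}.

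For the first step, the key technical input is the well-known boundary expansion of the $\psi$-class at the point $0$ on the Losev–Manin space $LM_{2g-1+n}$. Explicitly, $\psi_0$ equals the sum of boundary divisors $D_I$ indexed by subsets $I$ of the middle marked points, where $D_I$ parametrizes targets in which $0$ and the points of $I$ lie on a separate component from $\infty$ and the remaining middle points. Pulling back via $t$ gives $\tilde\psi_0 = t^*\psi_0$, and iterating yields that $\tilde\psi_0^k$ decomposes into a sum over nested sequences of target degenerations of length $k$, equivalently over leveled degenerations of the rubber target.

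Combining this with the geometric series $1/(1-\tilde\psi_0) = \sum_{k\geq 0} \tilde\psi_0^k$, pushing forward by $s$ to $\oM_{g,n+1}$, and inserting $\lambda_g$, produces a sum indexed by stable rooted trees $T$ equipped with admissible level functions. On each corresponding stratum, the rubber moduli space factors as a product of rubber moduli spaces over the vertices, and the compatible splitting of the virtual class produces a DR-cycle at each vertex weighted by the sums $a(v)$ of leg variables flowing through. The multiplicativity of $\lambda_g$ under boundary gluing ensures $\lambda_g$ appears at every vertex, while its vanishing on any loop of edges forces only tree-type strata to contribute. Extracting the coefficient of $a_1^{d_1}\cdots a_n^{d_n}$ then reproduces the tree-sum formula for $A^1_{g,d_1,\dots,d_n}$ from \cite{BS22}. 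The main obstacle will be the combinatorial bookkeeping: verifying the exact correspondence between the nested target degenerations on the Losev–Manin side and the leveled stable rooted trees of \cite{BS22}, tracking all multiplicity factors from the ordering of middle points, and confirming that no sign discrepancies appear (both expansions being positive, in contrast to the $(-1)^{\max\ell(V(T))}$ weights entering the $B$-class).

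For the second step, both $A^1_{g,n}$ and $B^1_{g,n}$ are polynomials in $a_1,\dots,a_n$ whose coefficient of $a_1^{d_1}\cdots a_n^{d_n}$ is a tautological class of cohomological degree $d_1 + \dots + d_n$, as already noted after~\eqref{eq:Deinition-of-B} and~\eqref{eq:NewA1}. Consequently, a bound of the form $\deg(B^1_{g,n} - A^1_{g,n}) \leq 2g-1$ in the variables $a_i$ is equivalent to the vanishing of the corresponding tautological coefficients whenever $d_1 + \dots + d_n > 2g-1$, which by the identity established in the first step is exactly \cite[Conjecture~2]{BS22}. The analogous translation handles parts (1) and (3) of Conjecture~\ref{conj:A=B}, matching them respectively with \cite[Conjectures~1 and 3]{BS22} and yielding the stated equivalence.
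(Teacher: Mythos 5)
Your overall strategy coincides with the paper's: expand $1/(1-\tilde\psi_0)$ through the boundary geometry of the Losev--Manin space, push forward by $s$, use $\lambda_g$ to suppress loops and the splitting of the rubber virtual class to localize the ramification on single components, and thereby recover the tree sum defining $A^1_{g,d_1,\dots,d_n}$ in \cite{BS22}; your second step (translating the degree bounds coefficient-by-coefficient) is also exactly right and unproblematic. The genuine gap is in the place where you write that ``the main obstacle will be the combinatorial bookkeeping \dots tracking all multiplicity factors'': that bookkeeping \emph{is} the mathematical content of the lemma, and you do not carry it out. The original definition assigns to each tree $T\in\SRT_{g,n,1}$ the coefficient $C_1(T)=\prod_{v\in V(T)}\chi(v)/D\chi(v)$, whereas the expansion of $\sum_d\tilde\psi_0^d$ produces each tree once for every rigorous level structure (order-compatible bijection $V(T)\to\{1,\dots,|V(T)|\}$), each occurrence weighted by a product of ratios of Euler characteristics. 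Proving that the sum over level structures equals $C_1(T)$ requires an actual argument --- the paper does it by noting that both quantities are multiplicative over forests, that the root factors agree termwise, and then inducting on the size of the tree after deleting the root --- and nothing in your proposal substitutes for it.

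A related point is your starting formula for $\psi_0$ on $LM_{2g-1+n}$: you expand it as a sum of boundary divisors with (implicitly) unit coefficients, whereas the formula actually needed, \cite[Eq.~(2)]{BSSZ}, reads
\[
\psi_{0}^{d}=\sum_{\substack{\chi_{1}+\cdots+\chi_{d+1}=2g-1+n\\ \chi_{1},\dots,\chi_{d+1}>0}}D_{\chi_{1},\dots,\chi_{d+1}}\,\frac{\chi_{1}}{\chi_{1}+\cdots+\chi_{d+1}}\cdots\frac{\chi_{d+1}}{\chi_{d+1}},
\]
and these rational weights are precisely what make each leveled occurrence of a tree contribute the right factor so that the total can match $C_1(T)$. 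With unit coefficients the multiplicity of a tree would just be the number of its level structures, which does not equal $C_1(T)$ in general. So the deferred ``bookkeeping'' is not cosmetic: without the weighted divisor formula and the identity between $C_1(T)$ and the level-structure sum, the identification of the two definitions of $A^1$ --- and hence the equivalence of the conjectures --- is not established.
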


\begin{proof} We have to show that our definition~(\ref{eq:NewA1}) of the class $A^1_{g,n}$ coincides with the definition given in~\cite{BS22} (see also~\cite[Section 2.4]{BLRS} for notations matching ours). To this end, we will first perform the intersection of the class $\tilde\psi_0$ with the moduli space of rubber maps using the technics of~\cite{BSSZ}, as we will show this yields exactly the same trees in $\SRT_{g,n,1}$ decorated by double ramification cycles as in the original definition of $A^1_{g,n}$. The only difference is the combinatorial coefficient that we will compare in a second time.

We express powers of $\psi_{0}$ in the Losev-Manin space $LM_{2g-1+n}$ using its expression as a boundary divisor \cite[Eq. (2)]{BSSZ} to obtain
\[
\psi_{0}^{d}=\sum_{\substack{\chi_{1}+\cdots+\chi_{d+1}=2g-1+n\\ \chi_{1},\dots,\chi_{d+1}>0}}D_{\chi_{1},\dots,\chi_{d+1}}\frac{\chi_{1}}{\chi_{1}+\cdots+\chi_{d+1}}\frac{\chi_{2}}{\chi_{2}+\cdots+\chi_{d+1}}\cdots\frac{\chi_{d+1}}{\chi_{d+1}},
\]
where $D_{\chi_{1},\dots,\chi_{d+1}}$ denotes the boundary strata in $LM_{2g-1+m}$ composed of $d+1$ components, where we number the components such that the first component contains $\infty$, the second is attached to the first and so on, and such that the $i$th component contains $\chi_{i}$ marked points (excluding the points $0$ and $\infty$ on each component). We now compute $\lambda_{g}s_{*}t^{*}\left(D_{\chi_{1},\dots,\chi_{d+1}}\right)$ using the following two observations. First, since the expression is ultimately multiplied by $\lambda_{g}$, we can consider that the prestable curves involved in $t^{*}\left(D_{\chi_{1},\dots,\chi_{d+1}}\right)$ have no loop in their dual graph. Second, by \cite[Lemma 2.3]{BSSZ}, the ramifications points of total order $\chi_i$ lying above the $i$th  component of $D_{\chi_{1},\dots,\chi_{d+1}}$ (excluding those above the nodes) must belong to a unique component, although additional unstable components may still be present. Note that by the Riemann-Huwtiz formula, the genus $g_i$ and the number of nodes $n_i$ of this stable component satisfy $\chi_i = 2g_i-2+n_i$.  As a result $\lambda_{g}s_{*}t^{*}\left(D_{\chi_{1},\dots,\chi_{d+1}}\right)$ gives a sum over trees $T \in \SRT_{g,n,1}$, where the contribution of each tree coincides precisely with the term $A\left(T\right)$ in the expression of the $A^1_{g,n}$-class from \cite[Section 2.4]{BLRS}, with combinatorial coefficient equal to $1$.

In the case of the original definition, the combinatorial coefficient of a tree $T\in \SRT_{g,n,1}$ is equal to 
\begin{align}
C_1(T) = \prod_{v\in V(T)} \frac{\chi(v)}{D\chi(v)}.	
\end{align}
For the computation of~\eqref{eq:NewA1} we have to sum over the number of times the same tree $T$ appears in the computation of powers of $\tilde{\psi}_{0}$ . It is easy to see that this corresponds to summing over the set of rigorous level structures $\mathcal{R}\left(T\right)$, that is, the set of one-to-one maps $f\colon V(T)\to\{1,\dots,|V(T)|\}$ such that $f(v_{1})>f(v_{2})$ if $v_{1}\in DV(v_{2})$. Then  the coefficient implied by~\eqref{eq:NewA1} is equal to
\begin{align}
	C_2(T) = \sum_{f\in \mathcal{R}(T)} \prod_{i=1}^{|V(T)|} \frac{\chi(f^{-1}(i))}{\chi(f^{-1}(i))+\cdots+\chi(f^{-1}(|V(T)|))}.	
\end{align}
Let us show that $C_1(T)=C_2(T)$. To this end, observe that both the definition of $C_1$ and $C_2$ can be applied to forests, moreover, 
\begin{align}
	C_1(T_1\sqcup \cdots T_k) = \prod_{i=1}^k C_1(T_i) \quad \text{and} \quad C_2(T_1\sqcup \cdots T_k) = \prod_{i=1}^k C_2(T_i).
\end{align}
Moreover, the factor in $C_1(T)$ corresponding to $v_r$ coincides with the factor for $v_r$ in each summand in $C_2(T)$. Thus we can remove the root vertex and use the factorization for the forests above to prove $C_1(T)=C_2(T)$ by induction on the size of the tree. 
\end{proof}

Various simplifications and partial results towards Conjecture~\ref{conj:A=B} are available in~\cite{BLRS, BHS,BS22,Gub22,LiuWang}.

\subsection{The \texorpdfstring{$A=B$}{A=B} identity in the Gorenstein quotient}  The main statement that we prove in this paper is the following theorem:

\begin{theorem}[main theorem]\label{thm:A=B} Conjecture~\ref{conj:A=B} holds in the Gorenstein quotient. Namely, the following three statements hold:
	\begin{enumerate}
		\item For any $(g,n,m)$ such that $g\geq 0$, $n\geq 1$, $m\geq 2$, and for any $\alpha \in R^*(\oM_{g,n+m})$ we have 
		\begin{align}
			\deg \left(\int_{\oM_{g,n+m}} \alpha B^m_{g,n}\right)  \leq 2g-2+m.
		\end{align}
		\item For any $(g,n)$ such that $g\geq 0$, $n\geq 1$, $2g-1+n>0$, and for any $\alpha \in R^*(\oM_{g,n+1})$ we have 
		\begin{align}
			\deg \left(\int_{\oM_{g,n+1}} \alpha (B^1_{g,n} - A^1_{g,n})\right)\leq 2g-1.
		\end{align}
		\item For any $(g,n)$ such that $g\geq 0$, $n\geq 1$, $2g-2+n>0$, and for any $\alpha \in R^*(\oM_{g,n})$ we have 
		\begin{align}
			\deg \left(\int_{\oM_{g,n}} \alpha (B^0_{g,n} - \frac{1}{\sum_{i=1}^n a_i} \pi_*A^1_{g,n})\right)\leq 2g-2.
		\end{align}
	\end{enumerate} 
\end{theorem}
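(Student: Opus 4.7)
The plan is to prove Theorem~\ref{thm:A=B} indirectly, by introducing an auxiliary family of conjectural tautological relations, the master relations of Section~\ref{sec:master-relation} (Conjecture~\ref{conj:master-relation}), and establishing them in the Gorenstein quotient. These relations are motivated by our earlier work on the twin system involving $\Omega$-classes and by virtual localization techniques. The strategy splits into a combinatorial piece and an intersection-theoretic piece.

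First, I would establish a purely combinatorial equivalence (Theorem~\ref{thm: equivalence Xi=0 and A=B}) between the generalized $A=B$ relations of Conjecture~\ref{conj:A=B} and the master relations. Both the $B^m_{g,n}$ classes from~\eqref{eq:Deinition-of-B} and the $A^1_{g,n}$ class from~\eqref{eq:NewA1} are expressed as sums over decorated stable rooted trees with $\psi$-class insertions and polynomial weights in $a_1,\dots,a_n$, and the master relations repackage these tree sums by separating the polynomial $a$-weights from the level structure. Since the identity lives at the level of tautological cycles, it descends to the Gorenstein quotient in both directions, so the theorem reduces to proving that the master relations vanish after pairing with any tautological class.

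Second, I would invoke the strong DR/DZ equivalence for the trivial cohomological field theory, proved in~\cite{BDGR1}. In our language, and via the Witten--Kontsevich theorem, this yields a family of intersection-number identities which are precisely the pairings of the $A^1_{g,n}$ and $B^1_{g,n}$ cycles against arbitrary monomials in $\psi$-classes at the regular and frozen legs, in the degrees predicted by Conjecture~\ref{conj:A=B}. Feeding these identities through the combinatorial equivalence of the previous step yields the master relations in the Gorenstein quotient, but so far only after pairing with $\psi$-class monomials, rather than with arbitrary tautological classes.

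The main obstacle, and the technical heart of the argument, is upgrading these $\psi$-monomial pairings to pairings with arbitrary tautological classes, which is exactly what the Gorenstein quotient statement demands. Here the structural advantages of the master formulation should pay off: the tree strata are organized by levels, with the polynomial $a$-weights concentrated on the top level, so pairing against a tautological class supported on a boundary stratum (and carrying $\psi$- and $\kappa$-decorations) can be evaluated by pushing the decoration onto the vertex moduli spaces of the master tree and reducing to pairings on strictly smaller moduli spaces. I expect to proceed by induction on $3g-3+n+m$, with the pure $\psi$-class vanishings supplied by the trivial CohFT providing the base case. Running the combinatorial equivalence in reverse then transports the master vanishing back into the desired $A=B$ statement and completes the proof of Theorem~\ref{thm:A=B}.
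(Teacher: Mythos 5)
Your proposal follows essentially the same route as the paper: introduce the master relations, prove their combinatorial equivalence with the generalized $A=B$ relations via the key lemma imported from~\cite{BLS-Omega}, reduce the Gorenstein-quotient vanishing from arbitrary tautological classes to $\psi$-monomials at the frozen legs using the boundary-restriction properties of the $\Xi$-classes, and feed in the trivial-CohFT vanishings as the base of an induction. The only detail you gloss over is that the trivial CohFT does \emph{not} directly supply the pairings $\int\psi_{n+1}^b(B^1_{g,n}-A^1_{g,n})$ for $b>0$ (cf.\ Remark~\ref{rem:thethirdstatement} and the remark following Lemma~\ref{lem:TrivialCohFT}); the paper recovers the corresponding statement for $\tilde\Xi^1_{g,n}$ by a string-equation argument relating $\int\psi_{n+1}^b\tilde\Xi^1_{g,n}$ to $\int\psi_{n+1}^b\psi_{n+2}^0\tilde\Xi^2_{g,n}$.
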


\begin{remark} \label{rem:thethirdstatement}
The third statement is a corollary of the second one that one can obtain by the projection formula. 	
\end{remark}

As a corollary of this theorem, several important consequences for integrable hierarchies follow. These consequences are developed in detail in~\cite[Section 4]{BS22}, but we include here a summary of the key ideas for clarity.

To illustrate these implications, consider the case of a cohomological field theory $\{c_{g,n}\}$. The key observation is that the $m$-th derivative of the potential of the CohFT 
\begin{align}
    \frac{\partial^m F}{\prod_{i=1}^m \partial t^{\alpha_i,p_i}}\big|_{t^{1,0}\to t^{1,0}+x}
\end{align}
is represented as a difference of a naturally defined polynomial in the variables $\{w^{\gamma,d}\}$, introduced in Eq.~(\ref{eq:def-w-top}), and a series in the variables $\{t^{\gamma,d}+\delta^{\gamma,d}_{1,0} x\}$ whose coefficients are the intersection numbers of the type
\begin{align}
\label{eq:Bm-part-equation}
\int_{\oM_{g,n+m}} c_{g,n+m}\big(\bigotimes_{i=1}^n e_{\beta_i}\otimes \bigotimes_{j=1}^m e_{\alpha_i}\big)\prod_{j=1}^m \psi_{n+j}^{p_j} \mathrm{Coeff}_{a_1^{d_1}\cdots a_n^{d_n}}(B^m_{g,n}),
\end{align}
for arbitrary $1\leq \beta_1,\dots,\beta_n\leq N$ and arbitrary $d_1,\dots,d_n\geq 0$ such that $\sum_{i=1}^n d_i > 2g-2+m$. We refer to~\cite[Theorem 4.6]{BS22} for a more general version of this observation that works for any F-CohFT, and it is based on \cite[Proposition 7.2]{BDGR1} and \cite[Proposition 3.5]{BGR19}.

The first statement of the main theorem implies the existence of the Dubrovin–Zhang hierarchy for any F-CohFT whose classes lie in the tautological ring, in the sense that the system of conservation laws is polynomial~\cite[Theorem 4.7]{BS22}. In the special case of a CohFT whose classes belong to the tautological, the vanishing of (\ref{eq:Bm-part-equation}) for $m=2$ directly implies the polynomiality of the conservation laws. 

The second statement implies that the Dubrovin-Zhang hierarchy is Miura equivalent, in a very explicit way, to the DR hierarchy of conservation laws associated to the same F-CohFT, also under assumption that its classes belong to the tautological ring~\cite[Theorem 4.10]{BS22}. In the vein of the discussion above, one has to notice that the integrals
\begin{align}
\int_{\oM_{g,n+1}} c_{g,n+1}\big(\bigotimes_{i=1}^n e_{\beta_i}\otimes e_{\alpha}\big) \mathrm{Coeff}_{a_1^{d_1}\cdots a_n^{d_n}}(A^1_{g,n})
\end{align}
describe the coefficient  of a vector field that generates Buryak's DR hierarchy~\cite[Theorem 4.9]{BS22}, and thus the corresponding vanishing statement identifies the conservation laws of the the DR hierarchy with the ones of the DZ hierarchy, up to a polynomial correction in the variables $\{w^{\gamma,d}\}$. The $\partial_x$ derivative of the latter correction defines the Miura transformation of the two hierarchies~\cite[Theorem 4.10 and Equation~(4.12)]{BS22} (in the latter reference it is explained in the natural generality of F-CohFTs).

Finally, the third statement implies the strong DR/DZ correspondence for (partial) CohFTs, also under assumption that the classes of (partial) CohFT belong to the tautological ring. In the vein of the discussion above, one has to notice that the integrals
\begin{align}
\int_{\oM_{g,n}} c_{g,n}\big(\bigotimes_{i=1}^n e_{\beta_i}\otimes e_{\alpha}\big) \mathrm{Coeff}_{a_1^{d_1}\cdots a_n^{d_n}}\left(\frac{1}{\sum_{i=1}^n a_i} \pi_*A^1_{g,n})\right)
\end{align}
describe the coefficients of the logarithm of the tau function of Buryak's DR hierarchy~\cite[Proposition 6.10]{BDGR1} and~\cite[Theorem 6.1]{BDGR20}.  This is a consequence of the new expression for the $A$-class, with its correspondence to the original definition detailed in Lemma~\ref{lem new def A}. Thus the corresponding vanishing statement identifies logarithms of the tau functions of the DZ and DR hierarchies, up to a polynomial correction in the variables $\{w^{\gamma,d}\}$. The derivatives of the latter correction define the normal Miura transformation that connects the two hierarchies~\cite[Section 4.4.4]{BS22} (in the latter reference it is explained in the natural generality of partial CohFTs).

We refer for further details to~\cite{BS22,BDGR1,BGR19}.

One important lemma that allows to establish the strong DR/DZ correspondence for any semi-simple cohomological field theory is a direct corollary of the classification proved in~\cite{Tel}:

\begin{lemma} [Corollary of~\cite{Tel}] The classes of any semi-simple cohomological field theory belong to the tautological ring.
\end{lemma}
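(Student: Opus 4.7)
The plan is to invoke Teleman's classification theorem~\cite{Tel} directly. Teleman proves that any semi-simple cohomological field theory $\{c_{g,n}\}$ is uniquely reconstructed from its underlying semi-simple Frobenius algebra (the degree-zero part determined by $c_{0,3}$) via the action of an $R$-matrix $R(z)\in\mathrm{End}(V)[[z]]$ satisfying $R(0)=\mathrm{Id}$. The reconstruction formula expresses each class $c_{g,n}(e_{\alpha_1}\otimes\cdots\otimes e_{\alpha_n})$ as an explicit finite sum, over stable graphs $\Gamma$ of genus $g$ with $n$ legs, of pushforwards by the boundary gluing maps $b_\Gamma\colon \oM_\Gamma \to \oM_{g,n}$ of local factors decorating the vertices, edges, and legs of $\Gamma$.

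The key observation is that every ingredient appearing in this formula is a tautological class. Indeed, the vertex contributions, written in the semi-simple canonical basis, reduce to scalar multiples of the fundamental classes of the corresponding moduli components. The $R$-matrix action, encoded via Givental's translation and vertex operators, inserts only $\psi$-classes on the half-edges of $\Gamma$, together with Hodge-type classes which, by the Mumford grothendieck-Riemann-Roch formula, are polynomial in $\kappa$-classes. All of these classes lie in the tautological ring by definition.

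Since the tautological ring is closed under multiplication by $\psi$- and $\kappa$-classes and under pushforward along boundary gluing maps, each summand $(b_\Gamma)_*\mathrm{Cont}(\Gamma)$ belongs to $R^*(\oM_{g,n})$. Summing over the finite set of stable graphs of genus $g$ with $n$ legs yields $c_{g,n}(e_{\alpha_1}\otimes\cdots\otimes e_{\alpha_n})\in R^*(\oM_{g,n})$, as required.

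The only nontrivial input is the invocation of Teleman's reconstruction theorem itself; once that formula is in hand, the verification that its local factors are tautological is a routine check using the standard generators of the tautological ring. In this sense the lemma is genuinely a corollary of~\cite{Tel} rather than an independent argument, and this is the reason it is stated as such.
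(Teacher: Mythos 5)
Your argument is correct and is exactly the one the paper intends: the paper offers no proof beyond citing Teleman's classification, and your expansion --- that the Givental--Teleman reconstruction writes each $c_{g,n}$ as a finite sum over stable graphs of boundary pushforwards of $\psi$- and $\kappa$-decorated vertex contributions, all of which lie in the tautological ring by its definition and closure properties --- is the standard justification. No gaps; this is the same approach, just written out.
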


\subsection{The trivial CohFT} An important case when the Dubrovin-Zhang hierarchy and the DR/DZ equivalence are fully understood is the case of the trivial CohFT~\cite{Bur,BDGR1}. In particular, this implies the following special cases of Theorem~\ref{thm:A=B}:

\begin{lemma}\label{lem:TrivialCohFT} The following three statements hold:
	\begin{enumerate}
		\item For any $(g,n,m)$ such that $g\geq 0$, $n\geq 1$, $m\geq 2$, and for any monomial $\prod_{i=1}^m \psi_{n+i}^{b_i} \in R^*(\oM_{g,n+m})$ we have 
		\begin{align}
			\deg \left(\int_{\oM_{g,n+m}} \left(\prod_{i=1}^m \psi_{n+i}^{b_i}\right)  B^m_{g,n}\right)  \leq 2g-2+m.
		\end{align}
		\item For any $(g,n)$ such that $g\geq 0$, $n\geq 1$, $2g-1+n>0$, we have 
		\begin{align} 
			\deg \left(\int_{\oM_{g,n+1}} (B^1_{g,n} - A^1_{g,n})\right)\leq 2g-1.
		\end{align}
		\item For any $(g,n)$ such that $g\geq 0$, $n\geq 1$, $2g-2+n>0$ we have 
		\begin{align}
			\deg \left(\int_{\oM_{g,n}} (B^0_{g,n} - \frac{1}{\sum_{i=1}^n a_i} \pi_*A^1_{g,n})\right)\leq 2g-2.
		\end{align}
	\end{enumerate} 
\end{lemma}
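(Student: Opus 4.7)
The plan is to deduce Lemma~\ref{lem:TrivialCohFT} from the well-understood DR/DZ equivalence in the case of the trivial cohomological field theory, where $N=1$ and $c_{g,n}=1\in H^0(\oM_{g,n})$. Under this specialization the intersection numbers arising in the discussion following Theorem~\ref{thm:A=B} (in particular those in Eq.~(\ref{eq:Bm-part-equation})) reduce precisely to the integrals appearing in the lemma, so the lemma is nothing more than the specialization of Theorem~\ref{thm:A=B} to the trivial CohFT, and its proof consists of invoking what is already known in that case.

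For statement (1), I would invoke the fact that the DZ hierarchy for the trivial CohFT is the KdV hierarchy, a polynomial evolutionary Hamiltonian system (Witten~\cite{wit-1}--Kontsevich~\cite{kon}, cf.~\cite{DZ}). By the discussion around Eq.~(\ref{eq:Bm-part-equation}), the $m$-th derivative of the potential $F$ of the trivial CohFT equals a differential polynomial in the $w$-variables plus a series in the $t$-variables whose coefficients are exactly
\[
\int_{\oM_{g,n+m}} \left(\prod_{j=1}^m \psi_{n+j}^{b_j}\right) \mathrm{Coeff}_{a_1^{d_1}\cdots a_n^{d_n}}(B^m_{g,n}).
\]
Polynomiality of KdV forces this series to vanish whenever $\sum_i d_i > 2g-2+m$, which is exactly the degree bound claimed.

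For statement (2), Buryak~\cite{Bur} proved that the DR hierarchy for the trivial CohFT is also the KdV hierarchy, and Buryak, Dubrovin, Gu\'er\'e, and Rossi~\cite{BDGR1} established the explicit normal Miura equivalence between the DR and DZ forms in this case. Via the geometric interpretation recalled after Theorem~\ref{thm:A=B}---where integrals of $A^1_{g,n}$ encode the vector fields generating Buryak's DR hierarchy and integrals of $B^1_{g,n}$ encode the analogous DZ data---this equivalence specialized to the trivial CohFT translates directly into $\deg \int_{\oM_{g,n+1}} (B^1_{g,n} - A^1_{g,n}) \leq 2g-1$.

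Statement (3) is then an immediate consequence of statement (2) via the projection formula for $\pi\colon\oM_{g,n+1}\to\oM_{g,n}$, as already noted in Remark~\ref{rem:thethirdstatement}. No substantive obstacle arises: the argument is essentially a dictionary translation from the classical polynomiality of KdV and the known Miura equivalence between its DR and DZ presentations into the desired degree bounds on tautological integrals, the dictionary itself being codified in~\cite[Section 4]{BS22} (built on~\cite[Section 7]{BDGR1} and~\cite{Bur}). The only care needed is to verify that the translation applies uniformly across all $(g,n,m)$ appearing in the lemma, which is ensured by the formalization of loc.~cit.
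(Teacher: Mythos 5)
Your proposal is correct and follows essentially the same route as the paper: statement (1) is deduced from the known polynomiality of the second and higher derivatives of the potential of the trivial CohFT via the dictionary of \cite[Section 4]{BS22}, and statements (2) and (3) are read off from the strong DR/DZ correspondence for the trivial CohFT established in \cite{Bur,BDGR1}. The only cosmetic difference is that you derive (3) from (2) by the projection formula (as in Remark~\ref{rem:thethirdstatement}) rather than citing the correspondence directly, which is equally valid.
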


\begin{proof} The second and the third statement are literally equivalent to the statements of the strong DR/DZ correspondence in this case, see~\cite{Bur,BDGR1,BS22}. The first statement is equivalent by~\cite[Section 4]{BS22} to the fact that the second and higher derivatives of the potential $F(t_0,t_1,\dots)$ corresponding to the string solution are known to be differential polynomials in the dependent variables in this case, see e.~g.~\cite{DZ}.
\end{proof}

\begin{remark} Note that in the case $m=1$ we don't have a ready statement on the degree of $\int_{\oM_{g,n+1}} \psi_{n+1}^b (B^1_{g,n} - A^1_{g,n})$ for $b>0$. The reason for this is that the first derivatives of the potential $F$ with respect to the variables $t_b$, $b>0$, play no direct role in the construction of integrable systems in this case.
\end{remark}

\section{The master relation in the Gorenstein quotient} \label{sec:master-relation}

\subsection{Main definitions and statements} The definition and overall idea to use what we call \emph{master relation} comes from a parallel paper~\cite{BLS-Omega}, where we use a close relative of this relation to prove the conjectures on the so-called $\Omega$-classes posed in~\cite{BLRS}.

\subsubsection{Pre-stable star rooted trees}
Fix $m\geq 1$, $n\geq 1$, and $g\geq 0$. Let $\PSSRT_{g,n,m}$ be the set of pre-stable star rooted trees, that is, the rooted trees with one root vertex, where the frozen legs $\sigma_{n+1},\dots,\sigma_{n+m}$ are attached, and all other vertices are connected by an edge to the root (hence the term ``star''). We also demand that no regular legs are attached to the root vertex. 

The graphs that we obtain are quite similar to the graphs that form a subset of $\SRT_{g,n,m}$, and we extend the definitions applying the same notation for various concepts related to these graphs. However, since we allow non-stable vertices, $\PSSRT_{g,n,m}$ is not quite a subset of $\SRT_{g,n,m}$.

\subsubsection{Classes assigned to vertices}
Let $T\in \PSSRT_{g,n,m}$. As in the case of trees in $\SRT_{g,n,m}$, we assign to each $v\in V(T)$ the moduli space of curves $\oM_{g(v),|H(v)|}$, where the first $|H_+(v)|$ marked points correspond to the positive half-edges attached to $v$ and ordered in an arbitrary but fixed way and the the last $|H_-(v)|$ marked points correspond to the negative half-edges attached to $v$, also ordered in some arbitrary but fixed way. The cases of non-stable pairs $(g(v),|H(v)|)$ will be treated separately, but informally one can think of a natural extension of the classes we use for stable vertices to the unstable moduli spaces. 

For the root vertex, we consider the class 
\begin{align}
	\Psi(v_r)& \coloneqq \prod_{i=1}^{|E(T)|} \frac{1}{1-a(h_i)\psi_i} \in R^*(\oM_{g(v_r),|E(T)|+m})\otimes_{\QQ}Q
\end{align}
Note that for pre-stable star rooted trees, $H_+(v_r)$ can be identified with $E(T)$. In the exceptional unstable case $g(v_r)=0, m=1, |E(T)|=1$ we formally assign to the root vertex the following class:
\begin{align}
	a(h_1)^{-1}\in R^{-1}(\oM_{0,2})\otimes_{\QQ}Q[a(h_1)^{-1}],
\end{align}
where the negative cohomological degree and the space $R^{-1}(\oM_{0,2})$ are  just formally assigned to allow to treat this case non-exceptionally in what follows, where it is replaced by a contraction rule under the boundary pushforward map. For instance, in this vein it is often convenient to formally extend the definition of the integrals of $\psi$ classes over $\oM_{0,2}$, where they are defined by $\int_{\oM_{0,2}} \big((1 - x_1\psi_1)(1 - x_2\psi_2)\big)^{-1} \coloneqq (x_1 + x_2)^{-1}$ .

For a non-root vertex $v$, we consider the class 
\begin{align}
	\bD(v)& \coloneqq \frac{\lambda_{g(v)}\DR_{g(v)}\big(a(h_1),\dots,a(h_{|H_+(v)|}),-a(v)\big)}{1-a(v)\psi_{|H(v)|}}   \in R^*(\oM_{g(v),|H(v)|})\otimes_{\QQ}Q
\end{align}
Note that in this case there exactly $|H(v)|= |H_+(v)|+ 1$ half-edges attached to $v$ with all positive half-edges being the regular legs, and the only negative half-edge being a part of the edge connecting $v$ to $v_r$.
Note also that in this case $a(v) = 
a(h_1)+\cdots+a(h_{|H_+(v)|})$. In the exceptional unstable case $g(v)=0, H_+(v)=1$ we formally assign to this vertex the following class:
\begin{align}
	a(h_1)^{-1}\in R^{-1}(\oM_{0,2})\otimes_{\QQ}Q[a(h_1)^{-1}],
\end{align}
where, as it was for the root vertex,  the negative cohomological degree and the space $R^{-1}(\oM_{0,2})$ are just formally assigned to allow to treat this case non-exceptionally in what follows. 

\subsubsection{Classes assigned to trees}
Now, let us assume that $2g-2+n+m>0$ and assign to a tree $T\in \PSSRT_{g,n,m}$ a class $\Xi(T)\in R^*(\oM_{g,n+m})\otimes_{\QQ}Q[u,u^{-1}]$. Here $u$ is a new formal variable to control the dimension.
We have:
\begin{align}
	\Xi(T)\coloneqq u^{2g-2+m} \left(\prod_{e\in E(T)} \frac{a(e)}{u}\right) (b_T)_* \left( \left(\sum_{d=-1}^\infty \frac{\Psi(v_r)_d}{(-u)^d}\right) \otimes\bigotimes_{v\in V(T)\setminus\{v_r\}} \left( \sum_{d=-1}^\infty \frac{\bD(v)_d}{u^d} \right) \right),
\end{align} 
where $(b_T)_*$ is the boundary pushforward map from $\bigotimes_{v \in V(T)} R^{*}(\oM_{g(v),|H(v)|})$ to $R^{*}(\oM_{g,n+m})$ extended by linearity to the rational functions in $a_i$ and Laurent polynomials in $u$.  In the case of pre-stable trees $b_T$ is assumed to contract the unstable components. Note that under the assumption $2g-2+n+m>0$ the dependence of the resulting formula on $a_i$'s is purely polynomial, so $\Xi(T) \in R^{*}(\oM_{g,n+m}) \otimes_{\QQ} Q[u,u^{-1}]$. Note also that for $2g-2+n+m>0$ the dimension of $\oM_{g,n+m}$ is $3g-3+n+m$ and thus the possible range of degrees in $u$ is from $-g+1-n$ to $2g-2+m$. 

\begin{remark} Note that $\mathrm{Coeff}_{u^{2g-2+m-d}}\Xi(T)\in R^{d}(\oM_{g,n+m}) \otimes_{\QQ} Q$
	and
	$\Xi(T)_d$ is a homogeneous polynomial of degree $d$ in $a_1,\dots,a_n$. (Here and below $\mathrm{Coeff}_{u^p}\xi$ denotes the coefficient of $u^p$ in a Laurent polynomial $\xi(u)$.) 
\end{remark}

\begin{remark}
As a side remark, note that in the case $2g-2+n+m=0$, that is, $g=0,n=m=1$, we have just one tree with two vertices, both unstable, and we can extend the definition of the class $\Xi(T)$ given above to produce the class 
$a_1^{-1}u^{0} \in R^{-1}(\oM_{0,2}) \otimes_{\QQ} Q[a_1^{-1}][u,u^{-1}]$.	
\end{remark}

\subsubsection{Classes assigned to sets of trees}
Let $2g-2+n+m>0$. Consider the following Laurent polynomial in a formal variable $u$ and a polynomial in the variables $a_1,\dots,a_n$:
\begin{align} \label{eq:DefinitionOfXi}
	\Xi^m_{g,n} \coloneqq \sum_{T\in \PSSRT_{g,n,m}} \Xi(T) .
\end{align}

\begin{remark} Note that $(\Xi^m_{g,n})_d\coloneqq \mathrm{Coeff}_{u^{2g-2+m-d}}\Xi^m_{g,n}\in R^{d}(\oM_{g,n+m}) \otimes_{\QQ} Q$
	and
	$(\Xi^m_{g,n})_d$ is a homogeneous polynomial of degree $d$ in $a_1,\dots,a_n$. 
\end{remark}

\begin{conjecture}[master relation] \label{conj:master-relation} For any $g\geq 0$, $m,n\geq 1$, $2g-2+n+m>0$, 
\begin{align}
	\Xi^m_{g,n} \in R^{*}(\oM_{g,n+m}) \otimes_{\QQ} Q[u].
\end{align}
In other words, for any $d\geq 2g-1+m$ we have 
\begin{align}
		(\Xi^m_{g,n})_d = 0.
\end{align}
\end{conjecture}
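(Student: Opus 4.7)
My plan is to realize $\Xi^m_{g,n}$ as the $\mbC^*$-equivariant pushforward of a non-equivariant class on a moduli space of stable relative maps to $\mbP^1$, so that polynomiality in the formal variable $u$ (equivalently, the vanishing $(\Xi^m_{g,n})_d = 0$ for $d \geq 2g-1+m$) becomes automatic. The key general principle is that for any $\mbC^*$-equivariant proper morphism $p\colon X \to Y$ and any non-equivariant class $\alpha \in H^*(X)$, the equivariant pushforward $p_*\alpha \in H^*_{\mbC^*}(Y) = H^*(Y) \otimes \QQ[u]$ is automatically polynomial in $u$, even though Graber--Vakil virtual localization expresses it as a sum of fixed-locus contributions each with apparent poles at $u=0$. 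The master relation is exactly the pole-cancellation identity produced in this way.

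First I would fix positive integers $a_1,\dots,a_n$, set $A \coloneqq \sum a_i$, and consider the moduli space $\oM \coloneqq \oM_{g,m}(\mbP^1,\mu_0,\mu_\infty)$ of stable relative maps of genus $g$ and degree $A$, with ramification profile $\mu_0 = (a_1,\dots,a_n)$ over $0$, profile $\mu_\infty = (A)$ over $\infty$, and $m$ additional free marked points. Let $\mbC^*$ act on $\mbP^1$ with tangent weight $+u$ at $0$ and $-u$ at $\infty$, and let $p\colon \oM \to \oM_{g,n+m}$ be the source stabilization composed with the forgetful map at the unique marking over $\infty$. Set
\[
\Theta^m_{g,n} \coloneqq p_*\bigl(\lambda_g \cap [\oM]^{\mathrm{vir}}\bigr) \in R^*(\oM_{g,n+m}) \otimes \QQ[u].
\]
Since $\lambda_g \cap [\oM]^{\mathrm{vir}}$ is non-equivariant, $\Theta^m_{g,n}$ is \emph{a priori} polynomial in $u$, and it lies in the tautological ring by the known tautologicality of (pushforwards of) double ramification cycles and their generalizations. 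Next I would apply virtual localization. The constraint $\mu_\infty = (A)$ forces every fixed locus to contain a single vertex $v_r$ over $\infty$ carrying all $m$ free markings, and each connected component over $0$ to be joined to $v_r$ by a single rational bridge, producing exactly the star topology of $\PSSRT_{g,n,m}$. Standard computations then give: each bridge of degree $a(e)$ contributes $a(e)/u$; the root (with weight $-u$) contributes $\prod_i (1 - a(h_i)\psi_i/(-u))^{-1}$, matching $\sum_d \Psi(v_r)_d/(-u)^d$; and each non-root vertex contributes $\lambda_{g(v)}\DR_{g(v)}(\dots,-a(v))/(1 - a(v)\psi/u)$, matching $\sum_d \bD(v)_d/u^d$. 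A careful bookkeeping of the equivariant weights of the moving part of the deformation-obstruction complex recovers the global prefactor $u^{2g-2+m}$, and summing over fixed loci yields $\Theta^m_{g,n} = \Xi^m_{g,n}$, hence $\Xi^m_{g,n} \in R^*(\oM_{g,n+m}) \otimes Q[u]$.

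The hard part will be this last matching step: tracking all sign conventions (the opposite weight at $\infty$ producing the $(-u)^d$), verifying the precise $u^{2g-2+m}$ prefactor, and, most delicately, handling the exceptional unstable fixed loci (genus-zero components with two special points) which on the localization side are governed by the universal formula for contracted components and on the $\Xi$ side are encoded through the formal device $a(h_1)^{-1} \in R^{-1}(\oM_{0,2})$ of Section~\ref{sec:master-relation}. A secondary obstacle is to argue that an identity proved for positive integer values of $a_1,\dots,a_n$ extends to the polynomial ring $Q$; this follows from polynomiality in $a_i$ of both sides, but requires a small separate verification on the localization side. Establishing this identity completes the proof of the master relation as stated, strengthening the Gorenstein-quotient result of the rest of this paper to a genuine tautological identity.
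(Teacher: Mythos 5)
First, a point of orientation: this paper does not prove Conjecture~\ref{conj:master-relation} at all --- it poses it, and what it actually establishes is only the image of the relation in the Gorenstein quotient (Theorem~\ref{thm:master-relation}), by a route that has nothing to do with localization: the combinatorial key lemma (Lemma~\ref{lem:KeyLemma}) relating $\Xi^m_{g,n}$ to $B^m_{g,n}$ and $A^1_{g,n}$, the equivalence with the generalized $A=B$ relations (Theorem~\ref{thm: equivalence Xi=0 and A=B}), a reduction of the test classes to monomials in $\psi$-classes at the frozen legs, and the already known DR/DZ equivalence for the trivial CohFT (Lemma~\ref{lem:TrivialCohFT}). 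Your strategy --- realize $\Xi^m_{g,n}$ as an equivariant pushforward of a non-equivariant class from a space of stable relative maps to $\mbP^1$, so that polynomiality in $u$ is automatic and localization produces the star trees of $\PSSRT_{g,n,m}$ --- would, if completed, prove strictly more than the paper does, and it is precisely the approach that the authors report (Section 1.7) was carried out in~\cite{BSS-Master} shortly after submission. So the idea is right and is genuinely different from the paper's own argument; what the paper's elementary route buys is independence from any relative Gromov--Witten machinery, at the price of only reaching the Gorenstein quotient.

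That said, your implementation has concrete gaps. (a) Nothing in your moduli problem confines the $m$ free marked points to the component over $\infty$: the relative condition $\mu_\infty=(A)$ constrains the map, not the free markings, which in a general fixed locus may sit on the rubber over $0$ or on a bridge, producing contributions (e.g.\ $\DR$-cycles with a zero-weight entry) that do not occur in $\Xi^m_{g,n}$. One needs additional insertions at these markings --- honest equivariant classes of a fixed point, so as not to destroy polynomiality of the pushforward --- and these insertions are also what make the $u$-degree bookkeeping close: without them the naive dimension count of $p_*(\lambda_g\cap[\oM]^{\mathrm{vir}})$ does not produce the prefactor $u^{2g-2+m}$, whose leading coefficient must be the fundamental class since $(\Xi^m_{g,n})_0=1$. (b) The profile $(A)$ over $\infty$ is in tension with your description of the fixed loci: a component contracted to $\infty$ with $|E(T)|$ bridges of degrees $a(e)$ attached cannot simultaneously carry a single relative point of ramification $A$; as stated, the fixed loci of your space would involve a single degree-$A$ bridge or a rubber over $\infty$ rather than the star configuration. (c) The matching of edge factors, the sign $(-u)$ at the root, the treatment of the unstable vertices against the formal classes $a(h_1)^{-1}\in R^{-1}(\oM_{0,2})$, and the extension from positive integers $a_i$ to the polynomial ring $Q$ are all flagged by you but not carried out --- and they are the actual content of the proof. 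As written, this is the correct strategy (the one of~\cite{BSS-Master}) with a flawed and incomplete implementation, not a proof.
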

The master relation is equivalent to Conjecture~\ref{conj:A=B}, the precise statement of this equivalence and its proof are delayed to the next section, see Theorem~\ref{thm: equivalence Xi=0 and A=B}.
For our applications, that is, for the case of tautological CohFTs and F-CohFTs, in particular for all semi-simple CohFTs, a weaker statement is sufficient:

\begin{theorem}[master relation in the Gorenstein quotient]\label{thm:master-relation} For any $g\geq 0$, $m,n\geq 1$, $2g-2+n+m>0$, and for any $\alpha \in R^*(\oM_{g,n+m})$ we have 
	\begin{align}
		\int_{\oM_{g,n+m}} \alpha \Xi^m_{g,n} \in Q[u].
	\end{align}
	In other words, for any $\alpha \in R^*(\oM_{g,n+m})$ we have  
	\begin{align}
		\deg \int_{\oM_{g,n+m}}  \alpha \Xi^m_{g,n}|_{u=1}  \leq 2g-2+m. 
	\end{align}
\end{theorem}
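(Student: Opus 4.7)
The plan is to combine a combinatorial equivalence between the master relation and the generalized $A=B$ relation (to be established as Theorem~\ref{thm: equivalence Xi=0 and A=B} in Section~\ref{sec:proof}) with the partial $A=B$ information coming from the trivial CohFT in Lemma~\ref{lem:TrivialCohFT}. First I would invoke this equivalence to rewrite $\int_{\oM_{g,n+m}} \alpha\, \Xi^m_{g,n}$ as a signed sum of integrals in which the contribution at the root vertex of each pre-stable star rooted tree takes the form of a $B^m$-integral, while the contributions at non-root vertices take the form of $A^1$-integrals carrying $\lambda_g \DR_g$ insertions. This rewriting is purely combinatorial, based on refining each pre-stable star rooted tree by a level structure on its vertices, and it descends to the Gorenstein quotient without loss.

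Next, to control the integration against an arbitrary $\alpha \in R^*(\oM_{g,n+m})$, I would decompose $b_T^*\alpha$ on each boundary stratum $\oM_T = \prod_v \oM_{g(v),|H(v)|}$ into tensor products of tautological classes on the vertex factors, using the standard structure of pullbacks of tautological classes along boundary maps, and then apply the projection formula to factorise the integral by vertex. At non-root vertices the factor $\bD(v) = \lambda_{g(v)}\DR_{g(v)}/(1 - a(v)\psi)$ has an intrinsic polynomial structure in the $a_i$'s of bounded degree, so those local integrals contribute the expected $u$-degrees automatically. The genuine content lies at the root vertex, where one is left with integrals of $\Psi(v_r)$ against tautological data on $\oM_{g(v_r),|E(T)|+m}$, and where the degree bound is not automatic.

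To reach the bounds provided by Lemma~\ref{lem:TrivialCohFT}, I would further rewrite the tautological decoration at the root as pushforwards of $\psi$-class monomials on frozen legs, possibly pulled back from enlarged moduli spaces along forgetful maps. The first statement of Lemma~\ref{lem:TrivialCohFT} then supplies the required degree bound for the resulting $B^m$-integrals against $\psi$-class monomials, and the second statement handles the analogous $(B^1 - A^1)$-integrals that arise from the non-root contributions after the combinatorial equivalence. Combining the root and non-root bounds through the boundary pushforward yields the total estimate $\deg \int_{\oM_{g,n+m}} \alpha\, \Xi^m_{g,n}|_{u=1} \leq 2g-2+m$.

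The main obstacle will be the reduction in the middle step: translating the integration of an arbitrary tautological $\alpha$ against $\Xi^m_{g,n}$ into a weighted sum of $\psi$-class monomial integrations in a form compatible with the sum over pre-stable star rooted trees. Additional $\psi$-class decorations and boundary subdivisions cause the supporting trees to refine, and the signed combinatorics of $\Xi^m_{g,n}$ must conspire with the intrinsic polynomial structure of $\bD(v)$ at non-root vertices so that only the bounded $u$-degree portion survives. The precise matching between the two sides of the equivalence Theorem~\ref{thm: equivalence Xi=0 and A=B} is what makes this possible: both sides reorganise the same underlying intersection-theoretic data, but only the $B^m - A^m$ side exposes the degree bound directly via the trivial CohFT input.
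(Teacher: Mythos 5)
Your overall strategy is the paper's: combine the combinatorial comparison between $\Xi^m_{g,n}$ and $B^m_{g,n}-\delta_{m,1}A^1_{g,n}$ (Lemma~\ref{lem:KeyLemma}) with the trivial-CohFT input of Lemma~\ref{lem:TrivialCohFT}, after reducing an arbitrary $\alpha$ to monomials in $\psi$-classes. However, two steps are genuinely missing. First, the reduction of arbitrary $\alpha$ to frozen-leg $\psi$-monomials (Lemma~\ref{lem:sufficient}) is not a formality: one needs explicit restriction formulas for $\Xi^m_{g,n}$ along all three types of boundary maps, under forgetful maps (for $\kappa$-classes), and under multiplication by $\psi$-classes at the \emph{regular} legs, together with the bookkeeping showing that the degree stays in the vanishing range $d\geq 2g-1+m$ after each operation. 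The regular-leg case (Lemma~\ref{lem:regular-psi}) is the delicate one: it rests on the BSSZ formula for $a_i\psi_i$ on a DR cycle and produces extra boundary terms with $\lambda\cdot\DR$ factors that must be reabsorbed into lower $\Xi$-classes. Your ``pushforwards of $\psi$-class monomials pulled back along forgetful maps'' covers the $\kappa$-classes but not the regular-leg $\psi$-classes or the boundary strata, and it is applied at the wrong stage (after the tree expansion rather than before, which would force you to re-run the whole reduction inside every boundary term).

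Second, and more seriously: after the combinatorial lemma, the non-root vertices carry classes of the type $\Xi^1_{g(v),|H_+(v)|}$ whose single frozen leg is the node, so integrating $b_T^*\alpha$ against them forces you to control $\int \psi^b\,\tilde\Xi^1_{g',n'}$ for $b>0$. Lemma~\ref{lem:TrivialCohFT}(2) only gives the bound for $b=0$ (see the remark following it: the first derivatives of the trivial-CohFT potential with respect to $t_b$, $b>0$, play no role in the integrable system, so there is no ready statement for $b>0$). Your proposal asserts that ``the second statement handles the analogous $(B^1-A^1)$-integrals that arise from the non-root contributions,'' but it does not. The paper closes exactly this gap by a separate induction on $b$ via the string equation, $\int\psi_{n+1}^b\psi_{n+2}^0\,\tilde\Xi^2_{g,n} = \int\psi_{n+1}^{b-1}\tilde\Xi^1_{g,n} + \bigl(\sum_i a_i\bigr)\int\psi_{n+1}^{b}\tilde\Xi^1_{g,n}$, which leverages the already-established $m=2$ case (Corollary~\ref{cor:Xi-monomials-psi-last-step}). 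Without this step your induction does not close.
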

The latter statement can also be rephrased as the vanishing of $\int_{\oM_{g,n+m}}\alpha(\Xi^m_{g,n})_d $ for $d\geq 2g-1+m$ and any $\alpha\in R^*(\oM_{g,n+m})$, and we call this vanishing the master relation in the Gorenstein quotient. 

The rest of this section is devoted to a reduction of a  proof of this theorem to the following statement:
\begin{lemma} \label{lem:sufficient}
	Theorem~\ref{thm:master-relation} holds for any $\alpha$ if it holds for the classes $\alpha=\prod_{i=1}^m \psi_{n+i}^{b_i}$ for any $b_1,\dots,b_m\in\ZZ_{\geq 0}$. 
\end{lemma}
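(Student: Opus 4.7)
The plan is to reduce a general tautological class $\alpha\in R^*(\oM_{g,n+m})$ to a combination of $\psi$-monomials at the frozen legs by exploiting the compatibility of $\Xi^m_{g,n}$ with boundary pushforwards. First, I would use the Graber--Pandharipande description of the tautological ring: every $\alpha$ can be written as a $\QQ$-linear combination of classes $(b_\Gamma)_*\omega$, where $\Gamma$ is a stable graph on $\oM_{g,n+m}$ and $\omega$ is a monomial in $\psi$- and $\kappa$-classes on the vertex moduli of $\Gamma$. Using Mumford's formula $\kappa_a=\pi_*\psi^{a+1}$ iteratively, one eliminates the $\kappa$-classes at the expense of enlarging $\Gamma$ with extra marked points which are then forgotten back. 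By the projection formula this reduces the theorem to bounding expressions of the form $\int_{\oM_\Gamma}\omega\cdot b_\Gamma^*\Xi^m_{g,n}$.

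The second step is a factorisation property of $\Xi^m_{g,n}$ along boundary strata: for every stable graph $\Gamma$, I expect that $b_\Gamma^*\Xi^m_{g,n}$ decomposes, up to excess intersection terms of smaller complexity, as a product $\prod_{v\in V(\Gamma)}\Xi^{m_v}_{g_v,n_v}$ over the vertices of $\Gamma$, with the local data $(g_v,n_v,m_v)$ and the assignment of frozen/regular legs on each factor determined by $\Gamma$ and the position of the original $m$ frozen legs. This factorisation reflects the splitting of the sum over $\PSSRT_{g,n,m}$ along any boundary, combined with the well-known splitting behaviour of the $\DR$ cycles and the $\lambda_g$ classes sitting inside the non-root vertex contributions $\bD(v)$. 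After this step, the integral becomes a finite sum of products $\prod_v\int\omega_v\cdot\Xi^{m_v}_{g_v,n_v}$ over smaller moduli.

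The final step is to absorb $\psi$-powers at regular legs into decorations at frozen legs in each factor. In a pre-stable star tree, a regular leg $\sigma_i$ is attached to a non-root vertex whose contribution already contains $\lambda_{g(v)}\DR_{g(v)}\cdot(1-a(v)\psi)^{-1}$; one can then apply identities exchanging $\psi$-powers at $\sigma_i$ against combinations of DR-cycle weights and boundary corrections (i.e.\ reducing to strata with more frozen legs and strictly smaller $(g,n,m)$), and invoke an induction on the triple $(g,n,m)$ to discharge the boundary contributions. After finitely many iterations every factor is reduced to the form $\int\prod_i\psi_{n+i}^{b_i}\cdot\Xi^{m_v}_{g_v,n_v}$, to which the hypothesis of the lemma applies. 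The main obstacle will be the boundary factorisation $b_\Gamma^*\Xi^m_{g,n}=\prod_v\Xi^{m_v}_{g_v,n_v}+\cdots$: a careful combinatorial analysis of how the star structure of $\PSSRT_{g,n,m}$ pulls back along a general $b_\Gamma$ is required, and in particular one must keep track of the formally unstable vertices and the compensating $a^{-1}$ factors so that the pre-stable star trees restrict correctly on each vertex moduli of $\Gamma$.
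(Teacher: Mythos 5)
Your plan follows the paper's own proof essentially step for step: reduction of a general tautological class to boundary pushforwards of $\psi$--$\kappa$ monomials, factorization of $b_\Gamma^*\Xi^m_{g,n}$ along boundary divisors (where the ``excess'' term you anticipate is precisely the extra $\lambda_{g_2}\DR_{g_2}$ summand that appears when all frozen legs land on one component), elimination of $\kappa$-classes via forgetful maps with the new $a$-variables set to zero, and a BSSZ-type identity trading $\psi$-powers at regular legs for frozen legs plus boundary corrections. The one ingredient you leave implicit that the paper makes explicit (and that actually drives the induction) is the degree bookkeeping: after each reduction one must check that at least one factor still has cohomological degree $\geq 2g'-1+m'$, i.e.\ remains in the vanishing range.
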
 

\subsection{Intersection with a divisor} 
\label{sec:intersection-with-divisor}

For practical computations it is convenient to extend the definition above to $n=0$, where $\Xi^{m}_{g,0}$ is defined for $2g-2+m>0$ and is set to zero.

Let $\rho_1\colon \oM_{g-1,n+m+2} \to \oM_{g,n+m}$ be the boundary map.

\begin{lemma} We have
	\begin{align}
		\rho_1^* (\Xi^m_{g,n})_d = (\Xi^{m+2}_{g-1,n})_d.
	\end{align}
\end{lemma}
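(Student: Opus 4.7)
The plan is to evaluate $\rho_1^* \Xi(T)$ separately for each $T \in \PSSRT_{g,n,m}$ and reassemble the sum. The class $\Xi(T)$ is a boundary pushforward from $\oM_T = \prod_{v \in V(T)} \oM_{g(v),|H(v)|}$ along $b_T$, so its pullback is controlled by the fibre product $W = \oM_T \times_{\oM_{g,n+m}} \oM_{g-1,n+m+2}$. Because $T$ is a tree, no edge of $T$ represents a non-separating node of the glued curve; the new node produced by $\rho_1$ must therefore appear as a self-loop inside some vertex $v \in V(T)$, giving the decomposition $W = \bigsqcup_{v} W_v$ with $W_v = \prod_{w \ne v} \oM_{g(w),|H(w)|} \times \oM_{g(v)-1,|H(v)|+2}$, which is empty unless $g(v) \ge 1$. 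A direct dimension count shows that $W_v$ has codimension $|E(T)|$ in $\oM_{g-1,n+m+2}$, equal to the codimension of $\oM_T$ in $\oM_{g,n+m}$, so the intersection is proper and no excess class appears in the pullback--pushforward identity.

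The next step is the vanishing of the self-loop contributions at non-root vertices. For every non-root vertex $v$ with $g(v) \ge 1$ the class $\bD(v)$ contains the factor $\lambda_{g(v)}$, and for the non-separating gluing $\rho_1^{(v)}\colon \oM_{g(v)-1,|H(v)|+2} \to \oM_{g(v),|H(v)|}$ one has the standard exact sequence $0 \to E_{g(v)-1} \to (\rho_1^{(v)})^* E_{g(v)} \to \mathcal{O} \to 0$ of Hodge bundles. Consequently $(\rho_1^{(v)})^* \lambda_{g(v)} = c_{g(v)}(E_{g(v)-1} \oplus \mathcal{O}) = 0$ by the Whitney sum formula, since $E_{g(v)-1}$ has rank $g(v) - 1$ and $c_1(\mathcal{O}) = 0$. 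For $g(v) = 0$ there is no self-loop at all. Hence only the contribution from a self-loop at the root vertex survives, and it is nontrivial exactly when $g(v_r) \ge 1$.

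Denote by $T'$ the graph obtained from such a $T$ by decreasing the root genus by one and attaching two new frozen legs $\sigma_{n+m+1}, \sigma_{n+m+2}$ at the root, leaving all other data unchanged. Then $T' \in \PSSRT_{g-1,n,m+2}$, and $T \mapsto T'$ is a bijection onto $\PSSRT_{g-1,n,m+2}$ from $\{T \in \PSSRT_{g,n,m} : g(v_r) \ge 1\}$, with inverse raising the root genus by one and removing the last two frozen legs. The associated classes also correspond: the overall prefactor $u^{2g-2+m} \prod_{e \in E(T)} a(e)/u$ is preserved since $2g - 2 + m = 2(g-1) - 2 + (m+2)$ and the edge sets (with their weights) coincide; the non-root vertex classes $\bD(w)$ are literally the same; and the root class $\Psi(v_r) = \prod_i (1 - a(h_i)\psi_i)^{-1}$ is preserved by $(\rho_1^{(v_r)})^*$ because it only involves $\psi$-classes at the positive half-edges, i.e.\ at the edges to the non-root vertices, not at the new frozen legs. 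Assembling,
\begin{equation*}
\rho_1^* \Xi^m_{g,n} \;=\; \sum_{T} \rho_1^* \Xi(T) \;=\; \sum_{T\colon g(v_r)\ge 1} \Xi(T') \;=\; \Xi^{m+2}_{g-1,n},
\end{equation*}
and extracting the degree-$d$ component yields the lemma. The main point that needs care is the excess-free fibre product decomposition combined with the vanishing $(\rho_1^{(v)})^* \lambda_{g(v)} = 0$ that eliminates all non-root contributions; the unstable vertex cases ($g(v) = 0$ on the non-root side, or the exceptional $g(v_r) = 0, m = 1, |E(T)| = 1$ root vertex) cause no difficulty, as they all occur in the $g(v) = 0$ regime where no self-loop refinement is available, consistent with such $T$ not corresponding to any $T' \in \PSSRT_{g-1,n,m+2}$ under the bijection.
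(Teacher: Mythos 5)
Your proof is correct and is essentially the argument the paper intends, which it only summarizes as ``a straightforward application of the instructions on intersection of tautological classes given in [GraPan]'': you carry out the fibre-product decomposition into self-loop strata, kill the non-root contributions via $(\rho_1^{(v)})^*\lambda_{g(v)}=0$, and match the surviving root self-loop terms with $\PSSRT_{g-1,n,m+2}$ through the genus-lowering bijection, with the $u$-prefactors agreeing because $2g-2+m=2(g-1)-2+(m+2)$. The dimension count ruling out excess contributions and the treatment of the unstable/exceptional vertices are both handled correctly.
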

\begin{remark}
Note that if $d\geq 2g-1+m$, then $d\geq 2(g-1)-1+(m+2)$, so the degree remains in the range that should vanish in the Gorenstein quotient once it was there before the pull-back. 	
\end{remark}

The other type of the boundary maps $\rho \colon \oM_{g_1,k_1+1}\times \oM_{g_2,k_2+1} \to \oM_{g_1+g_2,n+m}$ is indexed by the parameters $g_1,g_2\geq 0$ such that $g_1+g_2=g$ and the sets of labels $I_1\sqcup I_2 = \{1,\dots,n+m\}$ such that $|I_i|=k_i$ and $2g_i-1+k_i>0$ for $i=1,2$. We always assume that the marked point labeled by $k_i+1$ on $\oM_{g_i,k_i+1}$ is the node, and the other $k_i$ points are ordered in such a way that the one-to-one map $\{1,\dots,k_i\}\to I_i\subset \{1,\dots,n+m\}$ preserves the order.

Let $\rho_2\colon \oM_{g_1,k_1+1}\times \oM_{g_2,k_2+1} \to \oM_{g_1+g_2,n+m}$ be the boundary map such that $m_1=|I_1\cap \{n+1,\dots,m\}|$ and $m_2=|I_2\cap \{n+1,\dots,m\}|$ are both non-zero for the corresponding sets of labels $I_1$ and $I_2$. Let $n_1= |I_1\cap \{1,\dots,n\}|$ and $n_2=|I_2\cap \{1,\dots,n\}|$.

\begin{lemma} We have
	\begin{align}
		\rho_2^* (\Xi^m_{g,n})_d = \sum_{d_1+d_2=d} (\Xi^{m_1+1}_{g_1,n_1})_{d_1}\otimes (\Xi^{m_2+1}_{g_2,n_2})_{d_2}. 
	\end{align}
In this formula we assume that the arguments of $\Xi$-classes correspond to the marked points on the corresponding components. 
\end{lemma}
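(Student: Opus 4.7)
The strategy is to unfold the definition of $\Xi$ on both sides and exhibit a bijection between the contributing configurations, with matching cohomology classes.

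On the left-hand side, $(\Xi^m_{g,n})_d$ is a sum of stratum pushforwards $\Xi(T)$ over $T\in\PSSRT_{g,n,m}$, so I would analyze $\rho_2^*\Xi(T)$ one tree at a time. The key combinatorial observation is that every $T\in\PSSRT_{g,n,m}$ is a star with all frozen legs attached to the root and no regular legs there. Consequently, the separating node specified by $\rho_2$, which must distribute $m_1,m_2\geq 1$ frozen legs to the two sides with $m_1+m_2=m$, cannot coincide with any existing edge of $T$ (cutting a root-to-leaf edge leaves all frozen legs on the root side), nor can it arise from degenerating a non-root leaf vertex (since each leaf carries no frozen legs). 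Therefore the only configurations contributing to $\rho_2^*\Xi(T)$ come from separating degenerations of the root itself.

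Next, I would parametrize such root degenerations. Splitting the root $v_r$ of genus $g(v_r)$ and valency $|E(T)|+m$ into two components joined by a separating node compatible with $\rho_2$ amounts to choosing a partition $E(T)=E_1\sqcup E_2$ of the edges (equivalently, of the leaves), a compatible partition of the $m$ frozen legs into subsets of sizes $m_1,m_2$ matching $I_1,I_2$, and a genus splitting $g(v_r)=g_1'+g_2'$. This data, together with the subtree of $T$ on either side, produces a pair of pre-stable star rooted trees $T_1\in\PSSRT_{g_1,n_1,m_1+1}$ and $T_2\in\PSSRT_{g_2,n_2,m_2+1}$: the new root of $T_i$ has genus $g_i'$, the $(m_i+1)$-st frozen leg is the newly created node, and the original leaves on side $i$ are attached unchanged. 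This sets up a bijection between the (tree, root-degeneration) configurations on the LHS and the pairs $(T_1,T_2)$ parametrising the RHS.

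Then I would match the classes attached to each configuration. Under $\rho_2^*$ the root $\Psi$-class splits as $\Psi(v_r)\mapsto \Psi(\mathrm{root}_{T_1})\otimes\Psi(\mathrm{root}_{T_2})$: the new node is a frozen leg of each $T_i$ and so carries no $\psi$-factor, while each leaf-edge positive half-edge at the root stays on a single component and contributes to the $\Psi$-class of the corresponding new root with unchanged weight $a(e)$. The $\bD$-classes at leaves are untouched, since each leaf lies entirely on one side. The prefactor $\prod_{e\in E(T)}a(e)/u$ factorises as $\prod_{e\in E_1}a(e)/u\cdot\prod_{e\in E_2}a(e)/u$, and the overall $u$-exponent matches via $u^{2g-2+m}=u^{2g_1-2+(m_1+1)}\cdot u^{2g_2-2+(m_2+1)}$. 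Summing over the splittings $d=d_1+d_2$ of cohomological degree then assembles exactly the right-hand side. The main obstacle I anticipate is the bookkeeping at unstable components, either an unstable leaf of $T$ or an unstable piece produced by the root degeneration, where the formal assignment $a(h)^{-1}\in R^{-1}(\oM_{0,2})$ and the accompanying contraction rule under $b_T$ must be applied consistently on both sides of the bijection, and the cancellations between positive $a$-factors and formal negative-degree contributions have to be checked by hand. All other steps are multiplicative in an obvious way, so once these unstable cases are handled via the conventions of Section~\ref{sec:master-relation}, the identity follows.
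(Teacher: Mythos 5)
Your argument is correct and is exactly the content behind the paper's one-line proof, which simply invokes the Graber--Pandharipande rules for pulling back boundary pushforwards (the BSSZ divisor formula is not even needed for $\rho_2$, since, as you observe, the new node never coincides with an edge of $T$ nor lands on a $\bD$-decorated leaf). Your key observation --- that the frozen-leg distribution forces the node to arise only from degenerations of the root vertex, after which everything factorises multiplicatively --- is the whole point, and the unstable cases you flag are in fact vacuous here because each new root carries $m_i+1\geq 2$ frozen legs.
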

\begin{remark}
	Note that if $d\geq 2g-1+m$, then either $d_1\geq 2g_1-1+(m_1+1)$ or $d_2\geq 2g_2-1+(m_2+1)$, so the degree on at least one of the two components is in the range that should vanish in the Gorenstein quotient once the degree of the original class was in the vanishing range before the pull-back. 	
\end{remark}

Let $\rho_3\colon \oM_{g_1,k_1+1}\times \oM_{g_2,k_2+1} \to \oM_{g,n+m}$ be the boundary map and assume that all points with the labels ${n+1},\dots,{n+m}$ are on the first component,  that is, $I_1\supset \{n+1,\dots,m\}$ and $I_1\cap \{n+1,\dots,m\}=\emptyset$. As before, let $n_1=|I_1\cap \{1,\dots,n\}|$ and $n_2=|I_2|\subset \{1,\dots,n\}$.
\begin{lemma} We have
	\begin{align}
		\rho_3^* (\Xi^m_{g,n})_d & = \sum_{d_1+d_2=d} (\Xi^{m+1}_{g_1,n_1})_{d_1}\otimes (\Xi^{1}_{g_2,n_2})_{d_2} \\ \notag & \quad + \sigma_*(\Xi^{m}_{g_1,n_1+1})_{d-2g_2}(\{a_j\}_{j\not\in I_2},a_{I_2})\otimes \lambda_{g_2}\DR_{g_2}\Big(\{a_i\}_{i\in I_2},-a_{I_2}\Big).
	\end{align}
In this formula we assume that the arguments of $\Xi$-classes correspond to the marked points on the corresponding components. In the last summand we use  $a_{I_2}\coloneqq \sum\nolimits_{i\in I_2} a_i$. The map $\sigma_*$ is induced by the relabeling of the marked points 
\begin{align}
(1,\dots,n_1+m+1)\mapsto (1,\dots,n_1,n_1+m+1,n_1+1,\dots,n_1+m)    
\end{align}
on $\oM_{g_1,k_1+1}$.
\end{lemma}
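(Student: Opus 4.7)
The plan is to compute $\rho_3^{\ast} \Xi(T)$ for each $T\in \PSSRT_{g,n,m}$ separately and reassemble the results. Set $I_2^r \coloneqq I_2\cap\{1,\dots,n\}$ and, for each non-root vertex $v$ of $T$, let $A_v\subseteq\{1,\dots,n\}$ be the set of labels of the regular legs attached to $v$; since $T$ is a star, the sets $A_v$ partition $\{1,\dots,n\}$. The cut of $\rho_3$ is compatible with the degeneration $b_T$ in exactly one of two families of ways: \emph{(1)} every $A_v$ is either contained in $I_2^r$ or disjoint from $I_2^r$, and the cut sits at the root vertex of $T$, introducing a new edge of weight $a_{I_2}$ that separates the subtrees with $A_v\subseteq I_2^r$ from the root; or \emph{(2)} the cut either coincides with an existing edge of $T$ whose subtree is a single vertex $v^\ast$ with $A_{v^\ast}=I_2^r$, or sits inside a non-root vertex $v$ with $A_v\supsetneq I_2^r$, causing $v$ to split via the $\DR$-splitting formula with genera $(g_2,g(v)-g_2)$ and legs $I_2^r$ on the genus-$g_2$ side.

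For family \emph{(1)}, the pull-back of each factor $1/(1-a(h)\psi_h)$ at the root of $T$ splits cleanly because $\psi_h$ vanishes on the component not carrying the corresponding marked point; together with the geometric-series form of $\Psi(v_r)$ this reorganizes as $\Psi(v_r^{T_1})\otimes\Psi(v_r^{T_2})$, where $T_1\in\PSSRT_{g_1,n_1,m+1}$ keeps the root of $T$, the subtrees with $A_v\subseteq I_1^r$, and a new frozen leg of weight $a_{I_2}$ at the node, while $T_2\in\PSSRT_{g_2,n_2,1}$ is a new star whose root is the opposite side of the node and which carries the remaining subtrees. Summing over all such $(T_1,T_2)$ and degree splittings $d_1+d_2=d$ reproduces the first term of the formula.

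For family \emph{(2)}, the second component is a single stable component of genus $g_2$: the combination of $\lambda_{g_2}$ (coming from the compact-type splitting of $\lambda_{g(v^\ast)}$ or $\lambda_{g(v)}$) with the $\DR$-splitting piece supported on $I_2^r$ reassembles into $\lambda_{g_2}\DR_{g_2}(\{a_i\}_{i\in I_2},-a_{I_2})$, while all other potential splitting contributions on the second side are annihilated by $\lambda_{g_2}$, using its vanishing on loops and on non-compact-type boundary. The first component retains a tree $T_1\in\PSSRT_{g_1,n_1+1,m}$ whose extra regular leg, of weight $a_{I_2}$, is attached to the root of $T_1$ in the edge-coincidence sub-case and to a non-root vertex of $T_1$ in the vertex-splitting sub-case. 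Summing over all attachment locations reconstructs $(\Xi^m_{g_1,n_1+1})_{d-2g_2}$ evaluated at the weights $(\{a_j\}_{j\notin I_2},a_{I_2})$, and $\sigma_\ast$ simply reorders the marked points from the $\PSSRT$ convention (extra regular leg at position $n_1+1$) to the $\rho_3$ convention (node at position $n_1+m+1$).

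The main technical obstacle will be to control rigorously the splitting of $\bD(v)$ along a separating degeneration inside a non-root vertex: one must verify that, after multiplication by $\lambda_{g(v)}$, the only surviving terms are those of compact type with the asserted genus distribution and weight transfer, and track the restriction of the geometric series $1/(1-a(v)\psi_{|H(v)|})$ through the pull-back. Once these refined $\DR$-splitting identities are in place, the remaining argument is a combinatorial reindexing that repackages the surviving contributions into the right-hand side of the stated formula.
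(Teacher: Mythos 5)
Your overall strategy --- enumerate, for each star tree $T$, the generic graphs compatible with both $b_T$ and $\rho_3$, and sort them into ``root splits'', ``a non-root vertex splits'', and ``the new node coincides with an edge of $T$'' --- is exactly the computation the paper has in mind (its own proof is a one-line appeal to the Graber--Pandharipande pull-back rules and the BSSZ divisor formula), and your treatment of the first two families is essentially right, including the observation that $\lambda_{g_2}$ kills everything but the compact-type splitting of $\bD(v)$.

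There is, however, a genuine gap in the edge-coincidence sub-case: when the node of $\rho_3$ is identified with an existing edge $e^*$ of $T$, the Graber--Pandharipande formula produces an excess-intersection factor $-\psi'-\psi''$ at the two branches of that node, and your proposal never mentions it. This is not a detail one can suppress. The half-edges of $e^*$ already carry the series $\tfrac{1}{1-a_{I_2}\psi'}$ (from $\Psi(v_r)$) and $\tfrac{1}{1-a_{I_2}\psi''}$ (from $\bD(v^*)$), and the identity
\begin{align*}
\frac{a_{I_2}\,(-\psi'-\psi'')}{(1-a_{I_2}\psi')(1-a_{I_2}\psi'')}
=\frac{1}{1-a_{I_2}\psi'}+\frac{1}{1-a_{I_2}\psi''}-\frac{2}{(1-a_{I_2}\psi')(1-a_{I_2}\psi'')}
\end{align*}
shows that this single sub-case, combined with the signs coming from the $(-u)^{-d}$ versus $u^{-d}$ weighting of root and non-root vertices (which you also do not track), must be distributed between \emph{both} summands of the stated formula: it supplies the unstable-root tree of $\Xi^{1}_{g_2,n_2}$ in the first summand (the term $\lambda_{g_2}\DR_{g_2}/(1-a_{I_2}\psi)$, which carries strictly positive powers of the $\psi$-class at the node on the second factor and can come from no other source, since root-splitting and vertex-splitting both leave the new node undecorated) as well as the unstable-vertex trees of $\Xi^{m}_{g_1,n_1+1}$ in the second summand. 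Your sketch instead assigns the edge-coincidence case wholesale to the second summand, with the full series $\tfrac{1}{1-a_{I_2}\psi'}$ on the first factor and a bare $\lambda_{g_2}\DR_{g_2}$ on the second; as written this neither equals the actual excess-intersection contribution nor accounts for the $\psi$-decorated part of $\Xi^1_{g_2,n_2}$. (A smaller slip of the same kind: in family (1) the new node becomes a frozen leg on each side and contributes no multiplicative factor $a_{I_2}$, so speaking of ``a new edge of weight $a_{I_2}$'' is misleading --- taken literally it would put a spurious factor $a_{I_2}$ in the first summand.) To close the argument you need to insert the excess class, expand it against the two geometric series, and check that the resulting pieces, with their signs, reassemble into the unstable-root and unstable-vertex trees on the two sides of the formula.
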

\begin{remark}
	Note that if $d\geq 2g-1+m$, then in the first summand either $d_1\geq 2g_1-1+(m+1)$ or $d_2\geq 2g_2-1+1$, so the degree on at least one of the two components is in the range that should vanish in the Gorenstein quotient once the degree of the original class was in the vanishing range before the pull-back. In the second summand, $d-2g_2 \geq 2g_1-1+m$ once $d \geq 2g-1+m$.
\end{remark}

The proof of all three lemmas is a straightforward application of the instructions on intersection of tautological classes given in~\cite{GraPan} and a formula for the intersection of a double ramification cycle with a divisor in~\cite{BSSZ}.

\subsection{Intersection with a kappa class} Lemmata in Section~\ref{sec:intersection-with-divisor} reduce the vanishing of $\int_{\oM_{g,n+m}} \alpha (\Xi^m_{g,n})_d$ for $d\geq 2g-1+m$ for an arbitrary tautological class $\alpha$ to the vanishing of the same intersection numbers for the classes $\kappa_{c_1,\dots,c_l}\prod_{i=1}^{n+m} \psi_i^{b_i}$. Indeed, any tautological class $\alpha$ is given as a sum of classes $(b_\Gamma)_*(\beta)$, where $\Gamma$ is a stable graph, $(b_\Gamma)_*$ is the corresponding boundary pushforward map, and $\beta$ is a tensor product of the products of $\psi$- and $\kappa$-classes assigned to the vertices of $\Gamma$. Using the projection formula we can rewrite $\int_{\oM_{g,n+m}} (b_\Gamma)_*(\beta) (\Xi^m_{g,n})_d$ as the product over the moduli spaces corresponding to the vertices of $\Gamma$ of the integrals of the classes obtained as components of $(b_\Gamma)^*(\Xi^m_{g,n})_d$, whose computation then follows the splitting formulas established in Lemmata in Section~\ref{sec:intersection-with-divisor}, multiplied for the products of $\psi$- and $\kappa$-classes. The latter Lemmata imply that $(b_\Gamma)^*(\Xi^m_{g,n})_d$ is an external tensor product of the classes $(\Xi^{m'}_{g',n'})_{d'}$ and $\lambda_g'\DR_{g'}$. Moreover, for at least one vertex we do obtain as the factor in the external tensor produc the class $(\Xi^{m'}_{g',n'})_{d'}$ with the condition $d'\geq 2g'-1+m'$. 
Thus, once we know that all integrals $\int_{\oM_{g',n'+m'}} \kappa_{c_1,\dots,c_l}\prod_{i=1}^{n'+m'} \psi_i^{b_i} (\Xi^{m'}_{g',n'})_d$ for $d'\geq 2g'-1+m'$ vanish, we obtain the vanishing of $\int_{\oM_{g,n+m}} \alpha (\Xi^m_{g,n})_d$ for $d\geq 2g-1+m$ for an arbitrary tautological class $\alpha$.
 
To this end, we have the following statement:

\begin{lemma} \label{lem:kapp-class} We have
	\begin{align} \label{eq:kapp-class}
		\int_{\oM_{g,n+m}} \kappa_{c_1,\dots,c_l}\prod_{i=1}^{n+m} \psi_i^{b_i} \Xi^m_{g,n}
		= \int_{\oM_{g,n+l+m}} \prod_{i=1}^{n} \psi_i^{b_i} \prod_{i=1}^{l} \psi_{n+i}^{c_i+1} \prod_{i=1}^m \psi_{n+l+i}^{b_{n+i}} \Xi^m_{g,n+l} |_{a_{n+1}=\cdots=a_{n+l}=0}.
	\end{align}
\end{lemma}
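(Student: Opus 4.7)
The plan is to reduce the identity to a comparison on the larger moduli space $\oM_{g,n+l+m}$ using the representation of the monomial $\kappa$-class as a pushforward. Let $\pi\colon\oM_{g,n+l+m}\to\oM_{g,n+m}$ be the forgetful map that drops the marked points labelled $n+1,\dots,n+l$ and relabels the points $n+l+1,\dots,n+l+m$ as $n+1,\dots,n+m$. With the Arbarello--Cornalba formula $\kappa_{c_1,\dots,c_l}=\pi_{*}\prod_{i=1}^{l}\psi_{n+i}^{c_i+1}$ and the projection formula, the left-hand side of \eqref{eq:kapp-class} becomes
\[
\int_{\oM_{g,n+l+m}}\prod_{i=1}^{l}\psi_{n+i}^{c_i+1}\cdot\pi^{*}\!\left(\prod_{i=1}^{n+m}\psi_i^{b_i}\cdot\Xi^{m}_{g,n}\right).
\]

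The external $\psi$-factors are handled by the standard comparison $\pi^{*}\psi_j=\psi_j-\sum_{k=1}^{l}D_{0,\{j,n+k\}}$ at every surviving marked point $j$. Each divisor $D_{0,\{j,n+k\}}$ parametrises a rational $\oM_{0,3}$-bubble that carries the forgotten leg $n+k$, on which $\psi_{n+k}$ vanishes. Therefore the divisor is killed by $\psi_{n+k}^{c_k+1}$, and against $\prod_{i=1}^{l}\psi_{n+i}^{c_i+1}$ one may replace each pulled back $\psi$ by the corresponding $\psi$ on the enlarged space. This produces exactly the external $\psi$-monomial appearing on the right-hand side, and reduces the lemma to the identity
\[
\prod_{i=1}^{l}\psi_{n+i}^{c_i+1}\cdot\pi^{*}\Xi^{m}_{g,n}
=\prod_{i=1}^{l}\psi_{n+i}^{c_i+1}\cdot\Xi^{m}_{g,n+l}\bigl|_{a_{n+1}=\cdots=a_{n+l}=0}
\]
inside $R^{*}(\oM_{g,n+l+m})\otimes_{\QQ} Q$.

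To establish this last identity I would expand both sides as sums over $\PSSRT$. On the substitution side, a tree $T'\in\PSSRT_{g,n+l,m}$ contributes non-trivially only if every non-root vertex has at least one descendant regular leg with index in $\{1,\dots,n\}$; otherwise the weight $a(v)$ on its incident edge vanishes and the edge factor $a(e)/u$ kills the term. On the pullback side, the standard formula for $\pi^{*}(b_T)_{*}$ writes $\pi^{*}\Xi(T)$ as a sum over trees $T''$ obtained from $T$ by distributing the extra legs among its non-root vertices, with the possibility of creating new rational bubbles that carry only extra legs; the new-bubble contributions are supported on divisors of $\oM_{0,3}$-type on which $\psi_{n+i}$ vanishes and are therefore absorbed by $\prod\psi_{n+i}^{c_i+1}$. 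The remaining direct-insertion contributions match the surviving $T'$-contributions term by term, because the class $\bD(v)$ evaluated with an added zero-weight argument coincides with the pullback of $\bD(v)$ under the relevant forgetful map, and the same holds for $\Psi(v_r)$.

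The main obstacle is this tree-matching step. One must keep careful track of the ordering and labelling of half-edges when extra legs are attached to a non-root vertex, and handle the exceptional pre-stable configurations in which an extra leg is inserted at an $\oM_{0,2}$-type vertex governed by the formal contraction rules used to define $\Xi(T)$. In each such exceptional case, however, the potential discrepancy lives on an unstable rational component on which $\psi_{n+i}$ vanishes, so it is again absorbed by the high-order factor $\prod_{i=1}^{l}\psi_{n+i}^{c_i+1}$, and the identity follows.
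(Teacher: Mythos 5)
Your argument follows essentially the same route as the paper's proof: represent $\kappa_{c_1,\dots,c_l}$ as a pushforward of $\psi$-monomials along the forgetful map, apply the projection formula, and invoke the identity that the pullback of $\Xi^m_{g,n}$ is $\Xi^m_{g,n+l}$ with the new weights set to zero (the paper simply asserts $\pi_{n+1}^*\Xi^m_{g,n}=\Xi^m_{g,n+1}|_{a_{n+1}=0}$ and iterates one point at a time, while you sketch a tree-by-tree verification). One small caveat: when all $l$ points are forgotten simultaneously the comparison reads $\pi^*\psi_j=\psi_j-\sum_{\emptyset\neq S}D_{j\cup S}$ over all nonempty subsets $S$ of the forgotten points, and for $|S|\geq 2$ the correction term is killed by a degree count on the $\oM_{0,|S|+2}$ bubble (where $\prod_{n+k\in S}\psi_{n+k}^{c_k+1}$ has degree at least $|S|$ on a space of dimension $|S|-1$) rather than by the vanishing of a single $\psi$-class on an $\oM_{0,3}$; forgetting one point at a time, as the paper does, sidesteps this.
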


\begin{proof} Let $\pi_{n+1}\colon \oM_{g,n+1+m}\to\oM_{g,n+m}$ be the map that forgets the $(n+1)$-st marked point and shifts the labels of the last $m$ marked points by $-1$. Then $\pi_{n+1}^*\Xi^m_{g,n} = \Xi^m_{g,n+1}|_{a_{n+1}=0}$. In order to obtain~\eqref{eq:kapp-class} we iterate this formula $l$ times and use the projection formula for $\kappa_{c_1,\dots,c_l} = (\pi_{n+1}\cdots\pi_{n+l})_*\prod_{i=1}^l\psi_{n+i}^{c_i+1}$.
\end{proof}

\begin{remark} 
Equation~~\eqref{eq:kapp-class} preserves the degree, that is, it is bounded by $2g-2+m$ on the left hand side if and only if it is bounded by $2g-2+m$ on the right hand side. In particular, together with Lemmata of Section~\ref{sec:intersection-with-divisor}, this reduces Theorem~\ref{thm:master-relation} to a weaker statement, where the class $\alpha$ is a product of $\psi$-classes.
\end{remark}

\subsection{Intersection with a \texorpdfstring{$\psi$}{psi} class at a regular leg}

\begin{lemma} \label{lem:regular-psi} For any $i=1,\dots,n$, we have:
\begin{align} \label{eq:psi-regular}
	& {a_i\psi_i} (\Xi^m_{g,n})_d  = (\sigma_i)^*(\Xi^{m+1}_{g,n-1}(\{a_j\}_{j\not=i}))_{d+1} -  (\Xi^{m}_{g,n})_{d+1} 
	\\ \notag & 
	+\sum_{\substack{g_1+g_2=g\\ n_1+n_2=n \\ i\in I\subseteq \{1,\dots,n\} \\ |I|=n_2}} a_I (\rho_3)_*\left((\Xi^{m}_{g_1,n_1+1})_{d-2g_2}(\{a_j\}_{j\not\in I},a_I)\otimes \lambda_{g_2}\DR_{g_2}\Big(\{a_i\}_{i\in I},-a_I\Big) \right).
\end{align}
Here in the first term by $\sigma_i$ we mean the relabeling of the points
\begin{align}
	(1,\dots,n+m) \to (1,\dots,i-1,i+1,\dots,n+m,i).
\end{align} 
In the last sum we use $a_I\coloneqq \sum\nolimits_{j\in I} a_j$ and assume that $2g_1-1+n_1+m>0$ and $2g_2-1+n_2>0$, $\rho_3$ is the boundary map described above. 
\end{lemma}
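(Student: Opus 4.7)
The plan is to compute $a_i \psi_i (\Xi^m_{g,n})_d$ by expanding $(\Xi^m_{g,n})_d$ via its definition as a sum over pre-stable star rooted trees $T \in \PSSRT_{g,n,m}$, following Eq.~\eqref{eq:DefinitionOfXi}. Since no regular leg may be attached to the root of such a tree, in every contributing $T$ the leg $\sigma_i$ sits on a unique non-root vertex $v$. By the projection formula, the product $a_i \psi_i \cdot \Xi(T)$ reduces to inserting $a_i \psi_k \bD(v)$ in place of $\bD(v)$ in the tensor product at the vertex $v$, where $\psi_k$ denotes the $\psi$-class at the local marked point of $v$ corresponding to $\sigma_i$. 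This localises the entire computation to the analysis of $a_i \psi_k \cdot \bD(v)$ inside $R^*(\oM_{g(v),|H(v)|}) \otimes_\QQ Q$.

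Next I would apply a BSSZ-type intersection formula from~\cite{BSSZ} to evaluate $a_i \psi_k \cdot \lambda_{g(v)} \DR_{g(v)}(\ldots, a_i, \ldots, -a(v))$ as a sum of boundary cycles associated with two-component degenerations of the genus-$g(v)$ curve, plus a residual term exchanging $\psi_k$ for a $\psi$-class at the edge side. Combining this with the expansion $\bD(v) = \sum_{j\geq 0} a(v)^j \psi_\mathrm{node}^j \lambda_{g(v)} \DR_{g(v)}(\ldots)$ and resumming, the resulting contributions should split into three families matching the right-hand side: (a) boundary cycles in which a genus-$g_2$ bubble splits off carrying leg $i$ together with a nonempty subset $I \ni i$ of the regular legs on $v$, which reassemble over all $T$ into the final DR boundary sum via the map $\rho_3$ of Section~\ref{sec:intersection-with-divisor}; (b) the degenerate case of a bubble carrying only leg $i$, whose trivial unstable geometry is handled by the formal convention $a(h_1)^{-1} \in R^{-1}(\oM_{0,2})$ introduced in Section~\ref{sec:master-relation}, and which after boundary pushforward yields trees in $\PSSRT_{g,n-1,m+1}$ with $\sigma_i$ promoted to a frozen leg, producing $(\sigma_i)^*(\Xi^{m+1}_{g,n-1})_{d+1}$; (c) the residual term moving $\psi_k$ to the edge, which combines with the geometric series via the identity $\psi_\mathrm{node}/(1-a(v)\psi_\mathrm{node}) = a(v)^{-1}\bigl(1/(1-a(v)\psi_\mathrm{node}) - 1\bigr)$ to yield $-(\Xi^m_{g,n})_{d+1}$.

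The main obstacle will be the combinatorial bookkeeping required to convert each BSSZ boundary term into the correct tree in the appropriate $\PSSRT$ set, while keeping track of the edge weights $a(e)/u$, powers of $u$ (which must shift by the correct amount under each of the three operations), and signs introduced by the boundary pushforwards. Family (b) is particularly delicate: the identification of an unstable bubble carrying only $\sigma_i$ with a new frozen leg on the root relies crucially on the formal conventions attached to unstable components in Section~\ref{sec:master-relation}, and one must verify that no spurious terms arise from this absorption. Once the three bijective matchings and their coefficient checks are in place, summation over $T \in \PSSRT_{g,n,m}$ produces the asserted identity.
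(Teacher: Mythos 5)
Your overall skeleton matches the paper's: expand $(\Xi^m_{g,n})_d$ over trees in $\PSSRT_{g,n,m}$, use the projection formula to localize $a_i\psi_i$ to the unique non-root vertex carrying $\sigma_i$, apply the $\psi$-class-on-DR formula of~\cite{BSSZ} there, and regroup. Your families (a) and (c) are essentially the paper's two sources for the third and second summands of~\eqref{eq:psi-regular} (with the caveat that the ``$+1$'' left over from your identity $-a(v)\psi_{\mathrm{node}}/(1-a(v)\psi_{\mathrm{node}})=-1/(1-a(v)\psi_{\mathrm{node}})+1$ does not vanish into $-(\Xi^m_{g,n})_{d+1}$: it is precisely what supplies the extremal terms of the $\rho_3$ sum with $g_2=g(v)$ and $I=DL(e_1)$, where the residual component is the unstable $\oM_{0,2}$; your write-up of (c) drops this piece and your (a) only covers the stable splittings).

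The genuine gap is family (b), i.e.\ the origin of the first summand $(\sigma_i)^*(\Xi^{m+1}_{g,n-1}(\{a_j\}_{j\not=i}))_{d+1}$. A degenerate BSSZ boundary term in which a genus-zero bubble carrying only $\sigma_i$ splits off a non-root vertex $v$ does not move $\sigma_i$ to the root: after contracting the unstable bubble the leg is back on $v$, so no boundary pushforward ``promotes'' it to a frozen leg, and in any case this splitting violates $2g_2-1+n_2>0$ and is excluded from the formula. The actual source of the first summand is the case you do not treat: the trees in which the vertex carrying $\sigma_i$ is itself the unstable $\oM_{0,2}$ vertex, so that after contraction $\sigma_i$ sits on the root component contributing a factor $1/(1-a_i\psi_i)$ to $\Psi(v_r)$. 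There is no DR cycle at such a vertex to which one could apply the BSSZ formula; instead, multiplication by $a_i\psi_i$ simply raises the power of $\psi_i$ at the root by one, so these trees reproduce all contributions to $-(\Xi^m_{g,n})_{d+1}$ with $\psi_i$-exponent at least one but never the exponent-zero ones. Those missing exponent-zero contributions are exactly the ones in which $\sigma_i$ behaves as a frozen leg (no $\psi_i$, no dependence on $a_i$), i.e.\ they equal $(\sigma_i)^*(\Xi^{m+1}_{g,n-1})_{d+1}$, which is why that term appears as a counter-term on the right-hand side. Without this analysis of the unstable-vertex trees your three families do not close up to the claimed identity.
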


\begin{proof} We just apply the following formula adapted from~\cite{BSSZ} to each term on the left hand side where the $i$-th marked point lies on a double ramification cycle, and then regroup the terms:
	\begin{align} \label{eq:psi-class-on-DR}
		& (a_i\psi_i+a\psi_{n+1}) \lambda_g \DR_g(a_1,\dots,a_n,-a) = 
		\\ \notag & \sum_{\substack{g_1+g_2=g\\ n_1+n_2=n\\ i\in I\subseteq \{1,\dots,n\} \\ |I|=n_2}} a_I \rho_*\left( \lambda_{g_1} \DR_{g_1}(\{a_j\}_{j\not\in I},-a,a_I)\otimes \lambda_{g_2} \DR_{g_2}(\{a_i\}_{i\in I},-a_I)  \right),
	\end{align} 
where $a = \sum_{i=1}^n a_i$, $a_I= \sum_{i\in I} a_i$, and $\rho\colon \oM_{g_1,n_1+2}\times \oM_{g_2,n_2+1}$ is the boundary map that glues the last marked points on both components into a node and maps the first $(n_1+1)$ points on the first component to the points with the indices $j\not\in I$ and $(n+1)$ and the first $n_2$ points on the second component to $i\in I$. 

In order to see how ~\eqref{eq:psi-regular} produces all terms on the right hand side of~\eqref{eq:psi-regular}, let us focus on a particular graph in $\PSSRT_{g,n,m}$ in the formula for $(\Xi^m_{g,n})_d$ and fix all degrees of $\psi$-classes associated via $\Psi(v_r)$ and $\bD(v)$, $v\in V(T)\setminus \{v_r\}$, to the half-edges in $E(T)$, except for one particular half-edge $e_1$ that connects $v_r$ to a vertex $v_1$. For simplicity assume both $v_r$ and $v_1$ are stable vertices, and the edge $e_1$ consists of two half-edges, $h_1$ attached to $v_r$ and $h_2$ attached to $v_1$, respectively. Since all degrees in the rest of the graph are fixed and $d$ is fixed, this edge is decorated by $a(e_1)^p\sum_{j=0}^p (-1)^j \psi(h_1)^j \psi(h_2)^{p-j}$. Assume further that $\sigma_i\in DL(e_1)$, that is, the leg $i$ is attached to $v_1$. 

Now, apply~\eqref{eq:psi-class-on-DR} to $a_i\psi_i\lambda_{g(v_1)}\DR_{g(v_1)}$. The term that corresponds to $-a_{n+1}\psi_{n+1}\lambda_g\DR_g$ in~\eqref{eq:psi-class-on-DR} turns the class $a(e_1)^p\sum_{j=0}^p \psi(h_1)^j \psi(h_2)^{p-j}$ associated to the edge $e_1$ into the sum of $-a(e_1)^{p+1}\sum_{j=0}^{p+1} (-1)^j \psi(h_1)^j \psi(h_2)^{p+1-j}$ and $a(e_1)^{p+1} \psi(h_1)^{p+1}$. The first term gives a summand in the formula for $-(\Xi^m_{g,n})_{d+1}$ on the right hand side of~\eqref{eq:psi-regular}, and the second term gives a summand in the second line of~\eqref{eq:psi-regular} for $g_2=g(v_1)$ and $DL(e_1)=\{\sigma_k\}_{k\in I}$. 

The cases when either $v_r$ or $v_1$ is unstable are analyzed in a similar way. In particular, is $v_1$ is unstable, then $DL(e_1)=\{e_i\}$, and we shouldn't use~\eqref{eq:psi-regular}, but rather contract the unstable vertex $v_1$, replace $h_1$ with the leg $\sigma_i$, and then note that multiplication by $a_i\psi_i$ increases the degree of the $\psi$-class assigned to $\sigma_i$ by $1$. This gives once again a contribution to the summand $-(\Xi^m_{g,n})_{d+1}$ on the right hand side of~\eqref{eq:psi-regular}. Note that this way we never obtain the degree of the $\psi$-class assigned to $\sigma_i$ equal to zero, which must be possible in the formula for $-(\Xi^m_{g,n})_{d+1}$, thus we need a counter term for this case hidden in the summand $(\sigma_i)^*(\Xi^{m+1}_{g,n-1}(\{a_j\}_{j\not=i}))_{d+1}$ in~\eqref{eq:psi-regular}.
\end{proof}

\begin{remark}\label{rem:regular-psi} Note that once on the left hand side of~\eqref{eq:psi-regular} $d\geq 2g-1+m$, then in all three summands on the right hand side of Equation~\eqref{eq:psi-regular} we have $d+1\geq 2g-1+(m+1)$, $d+1\geq 2g-1+m$, and $d-2g_2\geq 2g_1-1+m$ in the first, second, and third summand respectively. 
\end{remark}

Lemma~\ref{lem:regular-psi} together with Remark~\ref{rem:regular-psi} imply that the desired vanishing in the intersection numbers on the right hand side of Equation~\ref{eq:kapp-class} follows from the vanishing of
\begin{align}
	\deg \int_{\oM_{g,n+m}}  \prod_{i=1}^m \psi_{n+i}^{b_i} \Xi^m_{g,n}|_{u=1}     
\end{align}
in degrees $\geq 2g-1+m$ for all $g\geq 0$, $n,m\geq 1$ and all $b_1,\dots,b_m\in\ZZ_{\geq 0}$.
Together with Lemma~\ref{lem:kapp-class} and Lemmata of Section~\ref{sec:intersection-with-divisor}, this completes the proof of Lemma~\ref{lem:sufficient}.

\section{Proof of the main theorem} \label{sec:proof}

\subsection{Strategy of proof of Theorem~\ref{thm:A=B}} We first give a short outline of the proof and then we expand it to make it more precise. 

\subsubsection{Step 1} Using a combinatorial argument of~\cite{BLS-Omega} adapted to our case, we state a lemma which controls the difference between either: the $B$-class and the $\Xi$-class when $m\geq 2$, or the $A-B$-class and the $\Xi$-class when $m= 1$. 

\subsubsection{Step 2}A first consequence of this lemma is that the main theorem (Theorem~\ref{thm:main}) is equivalent to the master relation in the Gorenstein quotient (Theorem~\ref{thm:master-relation}).

\subsubsection{Step 3} We prove the master relation in the Gorenstein quotient. By a sequence of explicit intersections with tautological classes, we reduced Theorem~\ref{thm:master-relation} for any $\alpha$ to $\alpha=\prod_{i=1}^m \psi_{n+i}^{b_i}$, see Lemma~\ref{lem:sufficient}. To prove this special case, we use that the main theorem is already established in some special cases, see Lemma~\ref{lem:TrivialCohFT}, together with a second use of the lemma of step $1$.

\subsection{Step {1}: the key combinatorial lemma} The following lemma follows from a quite general argument described in a parallel paper, see ~\cite[Remark 5.4]{BLS-Omega}.

\begin{lemma}[Corollary of~\cite{BLS-Omega}] 
\label{lem:KeyLemma} The following two statements hold: 
\begin{enumerate}
	\item Let $m\geq 2$, $g\geq 0$, $n\geq 1$. Then, for any $d\geq 2g-1+m$, 
		\begin{align}
		(B^m_{g,n})_d -(-1)^d(\Xi^m_{g,n})_d
		\end{align}	
		is represented as a linear combination of tautological classes supported on graphs in $\SRT_{g,n,m}$ with at least one edge, where in each summand either one non-root vertex $v$ is decorated by $(\Xi^1_{g(v),|H_+(v)|})_{d(v)}$ with $d(v)\geq 2g(v)$ or the root vertex is decorated by $(B^m_{g(v_r),|H_+(v_r)|})_{d(v_r)}$ with $d(v_r)\geq 2g(v_r)-1+m$.
	\item Let $m=1$, $g\geq 0$, $n\geq 1$, $2g-1+n>0$. Then, for any $d\geq 2g$, 
	\begin{align}
		(B^1_{g,n}-A^{1}_{g,n})_d -(-1)^d(\Xi^1_{g,n})_d
	\end{align}
	is represented as a linear combination of tautological classes supported on graphs in $\SRT_{g,n,1}$ with at least one edge, where in each summand either one non-root vertex $v$ is decorated by $(\Xi^1_{g(v),|H_+(v)|})_{d(v)}$ with $d(v)\geq 2g(v)$ or the root vertex is decorated by $(B^1_{g(v_r),|H_+(v_r)|}-A^{1}_{g(v_r),|H_+(v_r)|})_{d(v_r)}$ with $d(v_r)\geq 2g(v_r)$. 
\end{enumerate}
\end{lemma}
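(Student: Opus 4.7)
My plan is to adapt the combinatorial argument of~\cite[Section 5]{BLS-Omega}, which establishes an analogous identity in the parallel setting of $\Omega$-classes. The identity in question compares two rather different tree expansions of the same cohomology class, that of $B^m_{g,n}$ over leveled degree-labeled stable rooted trees and that of $\Xi^m_{g,n}$ over pre-stable star rooted trees, and isolates the ``first level'' of a recursive structure relating them.

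The key reorganization is to group leveled trees $(T,p,\ell) \in \LDLSRT_{g,n,m}$ contributing to $(B^m_{g,n})_d$ according to their \emph{top star}: the star subgraph formed by the root vertex together with all level-$1$ vertices and the edges connecting them. What hangs below each level-$1$ vertex is itself a leveled degree-labeled tree contributing a lower-complexity $B^1$ class at that vertex. This rewrites $(B^m_{g,n})_d$ as a sum over the same pre-stable star trees in $\PSSRT_{g,n,m}$ that index the definition of $\Xi^m_{g,n}$: in both expansions the root carries $\Psi(v_r)$, while each non-root vertex carries a $B^1$ class in the reorganization of $B^m$ and $\bD(v)=\lambda_{g(v)}\DR_{g(v)}/(1-a(v)\psi_{|H(v)|})$ in the definition of $\Xi^m$.

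Given this parallel, the lemma reduces to a local identity at each non-root vertex expressing the ``low-degree'' part of a $B^1$ class as $(-1)^{d'}$ times the corresponding low-degree part of a $\Xi^1$ class, up to a high-degree remainder. The low-degree contributions reassemble precisely into $(-1)^d(\Xi^m_{g,n})_d$, with the overall sign obtained by matching the $(-1)^{\max\ell(V(T))}$ in $B^m$ against the $(-u)^{-d}$ factor in the root decoration of $\Xi^m$. The high-degree remainders reorganize into contributions supported on graphs in $\SRT_{g,n,m}$ with at least one edge, corresponding to the two possibilities in the statement: the high-degree part lives either at a non-root vertex, yielding a $\Xi^1$ class with $d(v)\geq 2g(v)$, or at the root vertex absorbing the residual degree, yielding a $B^m$ class with $d(v_r)\geq 2g(v_r)-1+m$.

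The case $m=1$ involving $A^1$ is handled analogously, and here the new description of $A^1$ from Lemma~\ref{lem new def A} is essential: the rigorous level functions appearing on the rubber moduli space correspond bijectively to admissible level functions of the type appearing in $B^1$, so the top-star decomposition applies verbatim with $B^1-A^1$ playing the role of $B^1$ at the root. The main obstacle I anticipate is the bookkeeping of signs and combinatorial coefficients through the recursion: in particular, verifying that the nested application of the local $B^1 / \Xi^1$ identity really closes up into a single clean top-level $\Xi^m$ class, rather than producing uncontrolled cross terms at intermediate levels. This is exactly the content of the star-decomposition argument in~\cite[Section 5]{BLS-Omega}, which should translate term by term to the present setting once the dictionary between $\Omega$-decorations there and $\lambda_g\cdot\DR$-decorations here is fixed.
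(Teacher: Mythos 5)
Your high-level strategy --- compare the two tree expansions and isolate the first step of a recursion --- is the right one, but the specific decomposition you build it on does not hold, and this is a genuine gap rather than bookkeeping. You claim that grouping the leveled trees contributing to $B^m_{g,n}$ by their top star rewrites $(B^m_{g,n})_d$ as a sum over star trees with $\Psi(v_r)$ at the root and a lower-complexity $B^1$ class at each level-$1$ vertex. This factorization fails: the admissibility inequality \eqref{eq:inequality-levels} is imposed level by level on the \emph{whole} tree, so for $i\geq 1$ it couples the degree labels of vertices sitting in different branches below the root, and the sign $(-1)^{\max\ell(V(T))}$ is governed by the deepest branch rather than being multiplicative over branches. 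Already for a root with two childless children $v_1,v_2$ the signed sum over the three admissible level functions produces cross terms of the form $[\,1+p(v_r)+p(v_i)\leq 2(g(v_r)+g(v_i))-2+m\,]$ that depend jointly on the root and on one branch, and hence cannot arise from any product of a root factor with full $B^1$ classes on the branches. The identity that actually does the job (the specialization of \cite[Equation (5.3) and Theorem 5.1]{BLS-Omega} recorded in the paper as the relation $\Sigma^m_{g,n}=0$) is organized the other way around: it is a telescoping sum over all possible cut levels, in which $B^m-\delta_{m,1}A^1$ reappears at the \emph{root} of star trees in high degree with $\bD$-classes at the leaves, while the recursive $\Xi^1$-decorations appear at the \emph{bottom-level} vertices of general trees whose upper levels still carry $\Psi$-classes, the per-level inequalities $d^{\mathrm{lvl}}(i)\leq 2g^{\mathrm{lvl}}(i)-2+m$ being kept as global conditions on each level. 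This is precisely why the conclusion of the lemma has $B^m$ at the root and $\Xi^1$ at a non-root vertex, and not the transposed structure your outline produces; the ``uncontrolled cross terms'' you flag as a risk are exactly where your version breaks.

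A second missing ingredient is the $m=1$ case, where the cancellation is not purely combinatorial. After the telescoping one is left with the exceptional terms $A^1_{g,n}+\bD_{g,n+1}+\sum(\text{star trees with }A^1\text{ at the root and }\bD\text{ at the leaves})$, and their vanishing is the pushforward of $\lambda_g\,t^*\Delta$ for an explicit divisor class $\Delta$ on the Losev--Manin space that equals zero; this is the same boundary expression for $\tilde\psi_0$ that underlies Lemma~\ref{lem new def A}, applied once more. It is this geometric relation, not a bijection between rigorous and admissible level functions, that lets $B^1-A^1$ play the role of $B^m$ at the root. As written, your proposal supplies neither the correct combinatorial identity nor this geometric input.
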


\begin{proof} 
Instead of giving a proof of this lemma, we rather make a precise connection to the notation and statements in~\cite{BLS-Omega} thus expanding~\cite[Remark 5.4]{BLS-Omega}.

The class $\ensuremath{\Psi(v)}$ in our setting, which appears in the definitions of $\ensuremath{B_{g,n}^{m}}$ and $\ensuremath{\Xi_{g,n}^{m}}$, is replaced in \cite{BLS-Omega} by a different class denoted $\ensuremath{\mathbb{\Pi}(v)}$. We show that the replacement
\[
\ensuremath{\Psi(v)}\leftrightarrow\ensuremath{\mathbb{\Pi}(v)}
\]
induces precisely to the replacements
\begin{equation}
\ensuremath{B_{g,n}^{m}}\leftrightarrow{\hhh\hspace{-7pt}\hhh}_{g,n}^{m}\label{eq:replacement-B}
\end{equation}
and
\begin{align}
\ensuremath{\Xi_{g,n}^{m}} & \leftrightarrow{\Upsilon\hspace{-7pt}\Upsilon}_{g,n}^{m}\label{eq:replacement-Xi}
\end{align}
in \cite{BLS-Omega}.

Regarding (\ref{eq:replacement-B}), the class $B^m_{g,n}$ is defined in terms of $\Psi(v)$ in exactly the same way as the class ${{\hhh \hspace{-7pt} \hhh}}^m_{g,n}$ in terms of $\mathbb{\Pi}(v)$ in  \cite{BLS-Omega}, cf.~Equation~\eqref{eq:Deinition-of-B} and~\cite[Definition 3.2]{BLS-Omega} --- the conditions for the trees in $\SRT_{g,n,m}$ used in \emph{op.~cit.} are incorporated in the definition of $\LDLSRT_{g,n,m}$ in Section~\ref{sec:LeveledSRT}. 


The replacement (\ref{eq:replacement-Xi}) requires some further explanation. Indeed, the class $\Xi^m_{g,n}$ is defined in Equation~\eqref{eq:DefinitionOfXi} as a sum over pre-stable star rooted trees, while the class ${\Upsilon\hspace{-7pt}\Upsilon}^m_{g,n}$ is defined in~\cite[Equation~(4.3)]{BLS-Omega} as the sum over stable star rooted trees with at least two vertices and two extra summands, $\delta_{m,1}\bD_{g,n+1}$ and $\mathbb{\Pi}^{m}_{g,n}$. In fact, for $2g-2+m+n$ there are three possible cases of pre-stable star rooted trees:
\begin{enumerate}
    \item The root is stable and all other vertices are unstable. Under the push-forward in the definition of $\Xi^m_{g,n}$ this gives the class $\Psi(v_r)$ for the stable rooted tree that consists of just one vertex $v_r$. This matches the exceptional summand $\mathbb{\Pi}^{m}_{g,n}$ in the notation of~\cite{BLS-Omega} under the replacement $\Psi(v)\leftrightarrow \mathbb{\Pi}(v)$.
    \item The root is not stable. In this case $m=1$ and the star structure implies that there is just one more vertex in the corresponding pre-stable star rooted tree, decorated by $\bD_{g,n+1}$. This exactly matches the other exceptional summand in~\cite[Equation~(4.3)]{BLS-Omega}.
    \item The root is stable and there is at least one more stable vertex. In this case we can stabilize the pre-stable star rooted tree forgetting the unstable vertices, and we obtain a one-to-one correspondence with the stable rooted trees of level exactly $1$ in the notation of~\cite[Equation~(4.3)]{BLS-Omega}.
\end{enumerate}

Now the reader is referred to the statement of~\cite[Equation~(5.3)]{BLS-Omega}. For each $(g,n,m)$ it defines a combinatorial combination of particular stable rooted trees decorated by classes ${{\hhh \hspace{-7pt} \hhh}}^{m}_{g',n'}$, ${\Upsilon\hspace{-7pt}\Upsilon}^{m'}_{g',n'}$, and $A^1_{g',n'}$, $g'\leq g$, $n'\leq n$, $2g'+n'\leq 2g+n$, $m'=1,m$, that is equal to zero for purely combinatorial reasons according to~\cite[Theorem 5.1]{BLS-Omega}. Under the replacement $\Psi(v)\leftrightarrow \mathbb{\Pi}(v)$ this becomes a combinatorial statement about the classes $B^{m}_{g',n'}$, $\Xi^{m'}_{g',n'}$, and $A^1_{g',n'}$. This identity controls the difference $$\left(B_{g,n}^{m}-\delta_{m,1}A_{g,n}^{1}\right)_{d}-\left(-1\right)^{d}\left(\Xi_{g,n}^{m}\right)_{d},$$ where $\left(B_{g,n}^{m}-\delta_{m,1}A_{g,n}^{1}\right)_{d}$ corresponds to the analogue of the first term on the right-hand side of~\cite[Equation~(5.3)]{BLS-Omega}, and $\left(-1\right)^{d}\left(\Xi_{g,n}^{m}\right)_{d}$ corresponds to the trivial graph contribution from the final term in~\cite[Equation~(5.3)]{BLS-Omega}. Since the full structure of this combinatorial identity is rather cumbersome and irrelevant to the proof in the present paper, we just summarize its key properties that are stated in the lemma. 
\end{proof}

\begin{remark} In all cases, the structure of the tautological classes supported on the trees $T$ in $\SRT_{g,n,m}$ used in Lemma~\ref{lem:KeyLemma} is similar to the one used in the definition of $B^m_{g,n}$: each vertex is decorated by a class in  $R^*(\oM_{g(v),|H(v)|})\otimes_{\QQ}Q$, whose component of cohomological degree $d$ is a homogeneous polynomial in $a_i$'s of degree $d$ (this can be $\Xi^1_{g(v),|H_+(v)|}$, $\Psi(v)$, or $\bD(v)$, or eventually $(B^m_{g(v_r),|H_+(v_r)|}-\delta_{m,1}A^1_{g(v_r),|H_+(v_r)|})$ for the root vertex), and we multiply the push-forward $(b_T)_*$ of the tensor product of such classes by $\prod_{e\in E(T)} a(e)$. 
\end{remark}

\begin{remark}
	As we see from the statement of Lemma~\ref{lem:KeyLemma}, is it more convenient to use the class $\tilde\Xi^m_{g,n}\coloneqq \sum_{d\geq 0}(-1)^d(\Xi^m_{g,n})_d$.
\end{remark} 

\begin{remark}
Since the vertices of the graphs in $\SRT_{g,n,m}$ with at least one edge have strictly less negative Euler characteristic than $2g-2+m+n$, Lemma~\ref{lem:KeyLemma} allows to perform various inductive arguments. In particular, this lemma has two immediate corollaries, both proved by induction on $(g,n)$ with fixed $m$.	
\end{remark}

\subsection{Step {2}: equivalence between the master relation and DR/DZ}
It follows by induction using the key lemma that the master relation (Conjecture~\ref{conj:master-relation}) is equivalent to the generalized $A=B$ relations (Conjecture~\ref{conj:A=B}), more precisely:
\begin{theorem}\label{thm: equivalence Xi=0 and A=B}
	The following equivalences hold:
	\begin{enumerate}
		\item For any $(g,n)$ such that $g\geq 0$, $n\geq 1$, $2g-1+n>0$
\begin{multline*}
			\begin{cases}
				\deg \Xi_{g,n}^1\leq2g-1,
				\\
				\deg \Xi_{g',n'}^1\leq2g'-1,\ \text{for all}\ g'\leq g,\, n'\leq n,\, 2g'+n'< 2g+n
			\end{cases}
			 \qquad \Longleftrightarrow \qquad \\
			 \begin{cases}
					\deg (B^1_{g,n} - A^1_{g,n})\leq 2g-1, \\
					\deg (B^1_{g',n'} - A^1_{g',n'})\leq 2g'-1,\ \text{for all}\  g'\leq g,\, n'\leq n,\, 2g'+n'< 2g+n.
			 \end{cases}
\end{multline*}
		\item Fix $m\geq 2$. For any $g\geq 0$, $n\geq 1$, $2g-2+n+m>0$,
\begin{multline*}
	\begin{cases}
			\deg \Xi_{g,n}^m\leq 2g-2+m, \\
			\deg \Xi_{g',n'}^m\leq 2g'-2+m,\ \text{for all}\  g'\leq g,\, n'\leq n,\, 2g'+n'< 2g+n\\
			\deg \Xi_{g',n'}^1\leq 2g'-1,\ \text{for all}\ g'\leq g,\, n'\leq n,\, 2g'+n'< 2g+n
	\end{cases}
		\quad \Longleftrightarrow \quad \\
		\begin{cases}
			\deg B^m_{g,n} \leq 2g-2+m,\\ 
			\deg B^m_{g',n'} \leq 2g'-2+m,\ \text{for all}\  g'\leq g,\, n'\leq n,\, 2g'+n'< 2g+n\\
			\deg (B^1_{g',n'} - A^1_{g',n'})\leq 2g'-1,\ \text{for all}\  g'\leq g,\, n'\leq n,\, 2g'+n'< 2g+n.
\end{cases}
\end{multline*}
	\end{enumerate}
\end{theorem}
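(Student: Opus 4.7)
The plan is to deduce both equivalences from Lemma~\ref{lem:KeyLemma} by induction on $2g+n$ (with $m$ kept fixed in part (2)), running the two implications of each equivalence simultaneously at every step. The key lemma provides a one-sided identity in which the difference between the $B$-side and the signed $\Xi$-side, namely $(B^1_{g,n} - A^1_{g,n})_d - (-1)^d (\Xi^1_{g,n})_d$ in part (1) or $(B^m_{g,n})_d - (-1)^d (\Xi^m_{g,n})_d$ in part (2), is expressed as a linear combination of tautological classes supported on trees in $\SRT_{g,n,m}$ with at least one edge, each decorated either by a $\Xi^1$-class at a non-root vertex or by a $(B^1 - A^1)$-class (resp.\ $B^m$-class) at the root, always in a local degree that lies in the prospective vanishing range of the outer induction.

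The core step is the inductive step for part (1). Assume the equivalence is already established for all $(g',n')$ with $2g'+n'<2g+n$, and suppose that one of the two sets of bounds in the statement holds also at level $(g,n)$. For any tree $T$ with at least one edge appearing on the right-hand side of Lemma~\ref{lem:KeyLemma}(2), every vertex $v$ (root or not) satisfies $2g(v)+|H_+(v)|<2g+n$, because each $\chi(v)>0$ and $|V(T)|\geq 2$. By the induction hypothesis, applied jointly to smaller triples for both the $\Xi^1$ and the $(B^1-A^1)$ side, together with the bound being assumed at level $(g,n)$, the class decorating each vertex vanishes in the local degree range provided by the lemma, namely $d(v)\geq 2g(v)$ for non-root vertices and $d(v_r)\geq 2g(v_r)$ for the root. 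Hence the whole right-hand side collapses, yielding the identity $(B^1_{g,n}-A^1_{g,n})_d = (-1)^d (\Xi^1_{g,n})_d$ for all $d\geq 2g$, which transfers the vanishing of either side to the other. The base case, at the minimal value of $2g+n$ subject to $2g-1+n>0$, is immediate because no tree in $\SRT_{g,n,1}$ with at least one edge and all vertices stable exists, so the key lemma produces a trivial identity.

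Part (2) is proved by the same mechanism, with $m\geq 2$ fixed throughout, invoking part (1) as an input so that the $\Xi^1$-factors supplied by Lemma~\ref{lem:KeyLemma}(1) at non-root vertices vanish in degree at least $2g(v)$, while the root $B^m$ factor vanishes by the mutual-induction argument internal to the $m$-th instance. The main obstacle is purely combinatorial bookkeeping: one must verify at each step that (i) $2g(v)+|H_+(v)|$ strictly decreases for every vertex of every tree with at least one edge, and (ii) the local degree inequalities $d(v)\geq 2g(v)$ at non-root vertices and $d(v_r)\geq 2g(v_r)-1+m$ at the root, as supplied by Lemma~\ref{lem:KeyLemma}, match exactly the \emph{should-vanish} ranges of the inductive hypothesis. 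Both conditions are built into the key lemma by design, so the induction closes without any further geometric or combinatorial input.
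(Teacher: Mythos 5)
Your overall strategy---a simultaneous induction on $2g+n$ that uses Lemma~\ref{lem:KeyLemma} to transfer the degree bound from one side to the other once all the decorating classes on the trees with at least one edge are known to vanish in their prescribed ranges---is exactly the route the paper takes (the paper records essentially no more detail than this one-line argument). Your treatment of part~(1) is correct: for $m=1$, stability of the leaf vertices and of the root forces $2g(v)+|H_+(v)|<2g+n$ for \emph{every} vertex of every stable tree in $\SRT_{g,n,1}$ with at least one edge, so each decoration supplied by the key lemma is governed by the inductive hypothesis and the identity $(B^1_{g,n}-A^1_{g,n})_d=(-1)^d(\Xi^1_{g,n})_d$ follows in the critical range.

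However, your claim (i) is false for $m\geq 2$, and this is exactly the delicate point of part~(2). Consider the two-vertex tree $T\in\SRT_{g,n,m}$ whose root is a genus-zero vertex carrying only the $m$ frozen legs and a single edge, the other vertex carrying the full genus $g$ and all $n$ regular legs. This tree is stable precisely when $m\geq 2$, and its non-root vertex $v$ has $2g(v)+|H_+(v)|=2g+n$. Lemma~\ref{lem:KeyLemma}(1) decorates $v$ by $(\Xi^1_{g(v),|H_+(v)|})_{d(v)}=(\Xi^1_{g,n})_{d(v)}$ with $d(v)\geq 2g$, i.e., by the $m'=1$ class at the \emph{same} pair $(g,n)$ in its critical degree range. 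This term is covered neither by your inductive hypothesis nor by the hypotheses listed in part~(2) of the statement, which only control $\Xi^1_{g',n'}$ (respectively $B^1_{g',n'}-A^1_{g',n'}$) for $2g'+n'<2g+n$; so as written the induction for part~(2) does not close. You need an additional argument to dispose of this contribution---for instance a pushforward (string-type) relation expressing $\Xi^1_{g,n}$ in degrees $\geq 2g$ through $\Xi^m_{g,n}$ and strictly smaller data, or a direct cancellation among these particular boundary terms---and you should say explicitly how it interacts with the hypotheses as stated. (The root decoration never causes this problem: stability of the leaves gives $2g(v_r)+|H_+(v_r)|<2g+n$ for every tree with an edge, so the $B^m$-decorated terms are always at strictly smaller $(g',n')$, as you assert.)
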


In particular, this equivalence holds in the Gorenstein quotient. Thus it suffices to prove the master relation in the Gorenstein quotient, that is Theorem~\ref{thm:master-relation}, to prove the main theorem. 

\subsection{Step 3: proof of the master relation in the Gorenstein quotient}

\label{sec:proof-master-Gor}

We prove Theorem~\ref{thm:master-relation} for $\alpha=\prod_{i=1}^m \psi_{n+i}^{b_i}$ which implies Theorem~\ref{thm:master-relation} for any $\alpha$ by Lemma~\ref{lem:sufficient}.

First, using the key lemma we deduce by induction the following statement:

\begin{corollary}\label{cor:monomials-psi} The following statements hold
\begin{enumerate}	
\item For any $g\geq 0$, $n\geq 1$, $2g-1+n>0$, and $b\geq 0$,
	\begin{align}
		\deg \int_{\oM_{g,n+1}} \psi_{n+1}^b \left(B^1_{g,n}-A^1_{g,n}-\tilde\Xi^1_{g,n}\right) \leq 2g-1
	\end{align}
	if 	
	\begin{align}
		\deg \int_{\oM_{g',n'+1}} \psi_{n+1}^b \left(B^1_{g',n'}-A^1_{g',n'}\right) \leq 2g'-1
	\end{align}
	holds for any $g'\geq 0$, $n'\geq 1$, such that $g'\leq g$, and $2g'+n'<2g+n$, and also
	\begin{align}
		\deg \int_{\oM_{g',n'+1}}  \tilde\Xi^{1}_{g',n'} \leq 2g'-1
	\end{align}
	holds for any $g'\geq 0$, $n'\geq 1$, such that $g'\leq g$, and $2g'+n'<2g+n$. 
\item Fix $m\geq 2$. For any $g\geq 0$, $n\geq 1$, and $b_1,\dots,b_m\in \ZZ_{\geq 0}$,
	\begin{align}
	\deg \int_{\oM_{g,n+m}} \left(\prod_{i=1}^m \psi_{n+i}^{b_i}\right) \left(B^m_{g,n}-\tilde\Xi^m_{g,n}\right) \leq 2g-2+m
\end{align}
if 
	\begin{align}
	\deg \int_{\oM_{g',n'+m}} \left(\prod_{i=1}^m \psi_{n'+i}^{b_i}\right) B^m_{g',n'} \leq 2g'-2+m
\end{align}
holds for any $g'\geq 0$, $n'\geq 1$, such that $g'\leq g$, and $2g'+n'<2g+n$, and also  
	\begin{align}
	\deg \int_{\oM_{g',n'+1}}  \tilde\Xi^{1}_{g',n'} \leq 2g'-1
\end{align}
holds for any $g'\geq 0$, $n'\geq 1$, such that $g'\leq g$, and $2g'+n'<2g+n$. 
\end{enumerate}
We recall our convention that a polynomial with negative degree is the zero polynomial.
\end{corollary}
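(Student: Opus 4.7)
The plan is to combine Lemma~\ref{lem:KeyLemma} with the projection formula and thereby reduce the statement to the inductive hypotheses. I present the argument for part~(2); part~(1) proceeds identically using Lemma~\ref{lem:KeyLemma}(2) in place of Lemma~\ref{lem:KeyLemma}(1), with $(B^1-A^1)$ playing the role of $B^m$ and the monomial $\prod\psi_{n+i}^{b_i}$ specialized to $\psi_{n+1}^b$.

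First I would apply Lemma~\ref{lem:KeyLemma}(1) to express $(B^m_{g,n}-\tilde\Xi^m_{g,n})_d = (B^m_{g,n})_d - (-1)^d(\Xi^m_{g,n})_d$, in the bad range $d\geq 2g-1+m$, as a linear combination indexed by decorated trees $T\in \SRT_{g,n,m}$ with at least one edge. Each summand has a distinguished vertex $v^*$ carrying either $(\Xi^1_{g(v^*),|H_+(v^*)|})_{d(v^*)}$ with $d(v^*)\geq 2g(v^*)$ (if $v^*$ is non-root) or $(B^m_{g(v_r),|H_+(v_r)|})_{d(v_r)}$ with $d(v_r)\geq 2g(v_r)-1+m$ (if $v^*=v_r$), while the remaining vertices carry the normal decorations $\Psi(v)$ or $\bD(v)$; the corresponding tensor product is pushed forward along $b_T\colon \oM_T\to \oM_{g,n+m}$ and multiplied by $\prod_{e\in E(T)} a(e)$.

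Next, since all frozen legs lie on the root, the $\psi_{n+i}$-classes pull back under $b_T^*$ to $\psi$-classes on the root component. The projection formula then factorizes $\int_{\oM_{g,n+m}} \prod\psi_{n+i}^{b_i} \cdot (b_T)_*(\cdots)$ as $\prod_{e} a(e)$ times a product of vertex-wise integrals, with the monomial $\prod\psi_{n+i}^{b_i}$ living on the root factor. The non-distinguished vertices contribute mere polynomials in the half-edge weights $a(h)$, and the distinguished vertex supplies the vanishing factor: a non-root $v^*$ triggers the inductive hypothesis on $\int \tilde\Xi^1_{g(v^*),|H_+(v^*)|}$ in degree $\geq 2g(v^*)$, and a distinguished root triggers the inductive hypothesis on $\int \prod\psi^b B^m_{g(v_r),|H_+(v_r)|}$ in degree $\geq 2g(v_r)-1+m$. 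Summing over trees yields the stated degree bound.

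The main obstacle is the verification that the distinguished-vertex complexity $(g(v^*),|H_+(v^*)|)$ satisfies $2g(v^*)+|H_+(v^*)| < 2g+n$, so that the inductive hypotheses truly apply. For a distinguished root this follows from $T$ having at least one edge together with the stability of the non-root vertices, which absorb positive contributions to either the genus or the half-edge count. For a distinguished non-root vertex in part~(1), the constraint $m=1$ combined with root stability (the root carries only one frozen leg) prevents $v^*$ from capturing the full $(g,n)$. For part~(2) with $m\geq 2$ the analogous verification relies on the structure of the combinatorial identity underlying Lemma~\ref{lem:KeyLemma}, traced back to~\cite[Theorem~5.1]{BLS-Omega}, which arranges the expansion so that any distinguished vertex has strictly smaller complexity. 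Once this is confirmed, the argument closes as an induction on $(g,n)$, the base cases being handled by the convention that a polynomial of negative degree is the zero polynomial.
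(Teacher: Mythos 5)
Your proposal follows exactly the route the paper takes: the paper gives no detail beyond ``using the key lemma we deduce by induction the following statement,'' and your expansion of that line --- apply Lemma~\ref{lem:KeyLemma}, pull back the $\psi$-monomial (which lands entirely on the root factor since all frozen legs sit there), factorize by the projection formula over the vertices of $T$, and let the distinguished vertex supply the vanishing via the inductive hypotheses --- is the intended argument. One point you should make explicit: for a distinguished \emph{non-root} vertex the hypothesis you invoke is $\deg\int_{\oM_{g',n'+1}}\tilde\Xi^{1}_{g',n'}\leq 2g'-1$ \emph{without} any $\psi$-insertion at the last point, so you must check that no $\psi$-class lands on that vertex's node under the factorization. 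This does hold --- the $\Xi^1$-decoration carries no $\psi$ at its negative half-edge and the edge $\psi$'s of the parent live on the parent's factor --- but it is essential, since the bound with a $\psi$-insertion is precisely what is \emph{not} yet available at this stage (cf.\ the remark after Lemma~\ref{lem:TrivialCohFT} and Corollary~\ref{cor:Xi-monomials-psi-last-step}).

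The genuine weak spot is your resolution of what you yourself call the main obstacle, in part (2). When $m\geq 2$ the root may be a genus-zero vertex carrying only the $m$ frozen legs and a single edge (stable, since $m-1>0$), with its unique child absorbing all of the genus and all $n$ regular legs; such trees do occur in the expansion of Lemma~\ref{lem:KeyLemma}, and there the distinguished non-root vertex has $2g(v^*)+|H_+(v^*)|=2g+n$. So the combinatorial identity does \emph{not} arrange a strictly smaller complexity at the distinguished vertex, contrary to your assertion. (For $m=1$ the degeneration is excluded by stability of the root, as you correctly argue.) The offending terms are nevertheless harmless: the bound they require is $\deg\int_{\oM_{g,n+1}}\tilde\Xi^1_{g,n}\leq 2g-1$ for the \emph{same} $(g,n)$, and in the overall induction of Section~\ref{sec:proof-master-Gor} this is established first --- from part (1) with $b=0$ together with Lemma~\ref{lem:TrivialCohFT}(2) --- before part (2) is invoked at level $(g,n)$. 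So the argument closes, but what saves it is the ordering of the two parts within each inductive step, not a strict drop in complexity; your write-up (like, admittedly, the literal statement of the corollary's hypotheses) does not record this.
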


Then, we combine the statements of Lemma~\ref{lem:TrivialCohFT} and Corollary~\ref{cor:monomials-psi}. Inductively, we obtain the following statement: 

\begin{lemma} \label{lem:Xi-monomials-psi} We have 
	\begin{align}
		\deg \int_{\oM_{g,n+m}} \left(\prod_{i=1}^m \psi_{n+i}^{b_i}\right) \tilde\Xi^m_{g,n} \leq 2g-2+m
	\end{align}
	for $m\geq 2$ and any $b_1,\dots,b_m\in \ZZ_{\geq 0}$ and 
	\begin{align}
		\deg \int_{\oM_{g,n+1}} \tilde\Xi^1_{g,n} \leq 2g-1.
	\end{align}
\end{lemma}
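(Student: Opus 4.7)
The plan is to carry out two nested inductions on the pair $(g,n)$: first establishing the $m=1$ bound, then bootstrapping to the $m\geq 2$ case. The unifying idea is that neither of the two available results---Lemma~\ref{lem:TrivialCohFT}, which controls $B^m$ and $B^1-A^1$ in the trivial CohFT case, nor Corollary~\ref{cor:monomials-psi}, which controls only differences like $B^1-A^1-\tilde\Xi^1$ and $B^m-\tilde\Xi^m$---gives the $\tilde\Xi$ bound on its own, but their difference does.

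First, I would prove the bound $\deg \int_{\oM_{g,n+1}} \tilde\Xi^1_{g,n}\leq 2g-1$ by induction on $2g+n$. In the induction step, applying Corollary~\ref{cor:monomials-psi}(1) with $b=0$ yields the bound on $\int_{\oM_{g,n+1}}(B^1_{g,n}-A^1_{g,n}-\tilde\Xi^1_{g,n})$, provided the $B^1-A^1$ bound and the $\tilde\Xi^1$ bound are known at strictly smaller parameters. The first is supplied uniformly by Lemma~\ref{lem:TrivialCohFT}(2), and the second is the inductive hypothesis. Subtracting the $B^1-A^1$ bound, again from Lemma~\ref{lem:TrivialCohFT}(2), at $(g,n)$ itself leaves the desired bound on $\tilde\Xi^1_{g,n}$. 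The base case is degenerate: at the smallest admissible $(g,n)$ the inductive hypothesis is vacuous.

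Second, with the $m=1$ statement in hand, I would fix $m\geq 2$ and any $b_1,\dots,b_m\in\ZZ_{\geq 0}$, and induct again on $2g+n$ to establish $\deg \int_{\oM_{g,n+m}} \bigl(\prod_{i=1}^m \psi_{n+i}^{b_i}\bigr) \tilde\Xi^m_{g,n}\leq 2g-2+m$. Corollary~\ref{cor:monomials-psi}(2) gives the bound on the difference $(\prod\psi^{b_i})(B^m_{g,n}-\tilde\Xi^m_{g,n})$ provided that (a) the analogous bound on $(\prod\psi^{b_i}) B^m_{g',n'}$ holds at smaller parameters, which it does by Lemma~\ref{lem:TrivialCohFT}(1), and (b) the $\tilde\Xi^1$ bound holds at smaller parameters, which it does by the first step. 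Subtracting Lemma~\ref{lem:TrivialCohFT}(1) applied at $(g,n)$ itself gives the claim.

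The main obstacle here is purely organizational: one must recognize that the two inductions have to be performed in this order, because Corollary~\ref{cor:monomials-psi}(2) requires the $\tilde\Xi^1$ bound at arbitrary smaller parameters as one of its hypotheses, and this can only be supplied once step one is complete. Once the correct ordering is identified, each induction step reduces to a one-line subtraction, and no genuinely new analytic or combinatorial input is needed; all the heavy lifting has been absorbed into Corollary~\ref{cor:monomials-psi} (which was derived from the key combinatorial Lemma~\ref{lem:KeyLemma}) and into Lemma~\ref{lem:TrivialCohFT} (the trivial CohFT input).
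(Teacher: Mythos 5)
Your proposal is correct and follows essentially the same route the paper intends (the paper compresses it to ``combine Lemma~\ref{lem:TrivialCohFT} and Corollary~\ref{cor:monomials-psi} inductively''): induct on $2g+n$ for the $m=1$ case using Corollary~\ref{cor:monomials-psi}(1) at $b=0$ together with Lemma~\ref{lem:TrivialCohFT}(2), then feed that into Corollary~\ref{cor:monomials-psi}(2) together with Lemma~\ref{lem:TrivialCohFT}(1) for $m\geq 2$. The only cosmetic remark is that your second induction is not really needed --- once the $\tilde\Xi^1$ bound is available at all parameters, Corollary~\ref{cor:monomials-psi}(2) applies directly since its other hypothesis is supplied unconditionally by Lemma~\ref{lem:TrivialCohFT}(1).
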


Note that we still don't have the statement for the degree of $\int_{\oM_{g,n+1}} \psi_{n+1}^b \tilde\Xi^1_{g,n}$  for any $b\in \ZZ_{\geq 0}$. However, by the pushforward formula which implies the string equation, we have 
\begin{align}
	\int_{\oM_{g,n+2}} \psi_{n+1}^b\psi_{n+2}^0 \tilde\Xi^2_{g,n} = \int_{\oM_{g,n+1}} \psi_{n+1}^{b-1} \tilde\Xi^1_{g,n} + \left(\sum_{i=1}^n a_i\right)\int_{\oM_{g,n+1}} \psi_{n+1}^{b} \tilde\Xi^1_{g,n}.
\end{align}
Hence, by induction on $b$, we obtain the following corollary of Lemma~\ref{lem:Xi-monomials-psi}:
\begin{corollary} \label{cor:Xi-monomials-psi-last-step} We have  
	\begin{align}
		\deg \int_{\oM_{g,n+1}} \psi_{n+1}^b \tilde \Xi^1_{g,n} \leq 2g-1
	\end{align}
	for any $b\in \ZZ_{\geq 0}$. 
\end{corollary}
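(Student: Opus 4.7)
The plan is an induction on $b$, with the base case handed to us by Lemma~\ref{lem:Xi-monomials-psi} and the inductive step driven by the displayed string-type identity
\begin{align*}
\int_{\oM_{g,n+2}} \psi_{n+1}^b \psi_{n+2}^0\, \tilde\Xi^2_{g,n} = \int_{\oM_{g,n+1}} \psi_{n+1}^{b-1}\, \tilde\Xi^1_{g,n} + \left(\sum_{i=1}^n a_i\right) \int_{\oM_{g,n+1}} \psi_{n+1}^b\, \tilde\Xi^1_{g,n},
\end{align*}
together with a very simple polynomial degree argument to solve for the target integral.

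For $b=0$ there is nothing to do: the bound $\deg \int_{\oM_{g,n+1}} \tilde\Xi^1_{g,n}\leq 2g-1$ is precisely the second assertion of Lemma~\ref{lem:Xi-monomials-psi}. For the inductive step, assume the statement at $b-1$. The left-hand side of the identity is, by Lemma~\ref{lem:Xi-monomials-psi} with $m=2$ and exponent vector $(b,0)$, a polynomial in $a_1,\dots,a_n$ of degree at most $2g-2+2=2g$. By the inductive hypothesis the first summand on the right-hand side has degree at most $2g-1$. Subtracting, the polynomial $\left(\sum_{i=1}^n a_i\right)\int_{\oM_{g,n+1}} \psi_{n+1}^b\, \tilde\Xi^1_{g,n}$ has degree at most $2g$. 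Since $n\geq 1$, the linear form $\sum_{i=1}^n a_i$ is nonzero, and multiplication by it raises polynomial degree by exactly one; hence $\int_{\oM_{g,n+1}} \psi_{n+1}^b\, \tilde\Xi^1_{g,n}$ itself has degree at most $2g-1$, closing the induction.

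The main obstacle, as far as rigor is concerned, is not the inductive scaffolding but the verification of the displayed identity itself. I would derive it by applying the forgetful projection $\pi\colon \oM_{g,n+2}\to \oM_{g,n+1}$ that drops the $(n+2)$-th frozen leg to the definition of $\tilde\Xi^2_{g,n}$: the standard string-equation contribution $\pi_*\psi_{n+1}^b = \psi_{n+1}^{b-1}$ accounts for the first summand, while the residual boundary term in which the forgotten frozen leg sits on an unstable rational component carrying one of the edges of the star tree supplies the extra factor $\sum_{i=1}^n a_i$ coming from the edge weight present in the definition of $\Xi$. Once this identity is unpacked, the rest of the argument is purely formal and the corollary follows.
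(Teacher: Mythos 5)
Your proof is correct and takes essentially the same route as the paper: the base case $b=0$ is the second bound of Lemma~\ref{lem:Xi-monomials-psi}, and the inductive step extracts the degree bound from the displayed string-equation identity by combining the $m=2$ bound of that lemma with the inductive hypothesis and cancelling the nonzero linear form $\sum_{i=1}^n a_i$. The paper leaves this degree bookkeeping (and the derivation of the identity itself) implicit, so your write-up simply fills in the intended details.
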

 
 This completes the proof of Theorem~\ref{thm:master-relation} by Lemma~\ref{lem:sufficient}, and hence of Theorem~\ref{thm:A=B}.

\end{document}